\documentclass[a4paper,10pt]{article}
\usepackage[margin=1.25in]{geometry}
\usepackage[]{algorithm2e}
\usepackage{amsthm}
\usepackage{caption}                         
\usepackage[utf8]{inputenc}
\usepackage{amssymb}
\usepackage{verbatim}
\usepackage{graphicx}
\usepackage{epstopdf}
\usepackage{indentfirst}                     
\usepackage[english]{babel}
\usepackage{subfig}                          
\usepackage{chngpage, calc}                  
\usepackage[font=small]{quoting}             
\usepackage[autostyle]{csquotes}             
\usepackage{amsmath}
\usepackage{mathtools}
\usepackage{bookmark}
\usepackage{mdframed}
\usepackage{xcolor}
\usepackage{enumerate}
\usepackage{hyperref}                        
\usepackage{algorithmic}
\usepackage[style=numeric,backend=bibtex]{biblatex}
\usepackage{empheq}
\usepackage{tikz}

\graphicspath{{Figures/}}
\bibliography{bibliography}
\numberwithin{equation}{section}

\newcommand{\dx}{dx}
\newcommand{\EM}{M}

\newcommand{\uht}{u_{h,\tau}^{(\mathbf{k})}}
\newcommand{\wht}{w_{h,\tau}^{(\mathbf{k})}}
\newcommand{\uhti}{u_{h,\tau}}
\newcommand{\whti}{w_{h,\tau}}
\newcommand{\uhkn}{u_{h,K_n}^n}
\newcommand{\whkn}{w_{h,K_n}^n}
\newcommand{\uhkmn}{u_{h,K_n-1}^n}
\newcommand{\whkmn}{w_{h,K_n-1}^n}
\newcommand{\uhnm}{u_{h,K_{n-1}}^{n-1}}
\newcommand{\whnm}{w_{h,K_{n-1}}^{n-1}}

\newcommand{\Tau}{\mathcal{T}}

\newcommand {\half}{ \frac{1}{2} }
\newcommand{\R}{\mathbb{R}}
\newcommand{\norm}[2]{{\left\| #2 \right\|}_{#1}}

\newcommand{\normal}{\nu}
\newcommand{\defeq}{\mathrel{\mathop:}=}

\theoremstyle{definition} 

\newtheorem{remark}{Remark}[section]

\theoremstyle{plain}
\newtheorem{theorem}{Theorem}[section]
\newtheorem*{theorem*}{Theorem}
\newtheorem{proposition}{Proposition}[section]
\newtheorem*{proposition*}{Proposition}
\newtheorem{lemma}{Lemma}[section]



\bibliography{references}

\usepackage{empheq}
\usepackage{tikz}
\graphicspath{{Figures/}}

\title{\textit{A posteriori} error estimates for the monodomain model in cardiac electrophysiology}
\author{
  Luca Ratti\footnotemark[1], \ 
	Marco Verani\footnotemark[1]
}

\date{January 22, 2019}

\definecolor{corr}{rgb}{0,0,0}
\definecolor{corr2}{rgb}{0,0,0}
\definecolor{corr3}{rgb}{1,0,0}

\begin{document}
\maketitle
\renewcommand{\thefootnote}{\fnsymbol{footnote}}
\footnotetext[1]{MOX-Dipartimento di Matematica, Politecnico di Milano, P.zza Leonardo da Vinci 32, 20133 Milano, Italy, \texttt{ \{luca.ratti,marco.verani\}@polimi.it}. The Authors are supported by INdAM-GNAMPA and INdAM-GNCS.  }
\begin{abstract}
We consider the monodomain model, a system of a parabolic semilinear reaction-diffusion equation coupled with a nonlinear ordinary differential equation, arising from the (simplified) mathematical description of the electrical activity of the heart. We derive \textit{a posteriori} error estimators accounting for different sources of error (space/time discretization and linearization). We prove reliability and efficiency (this latter under a suitable assumption) of the error indicators. Finally, numerical experiments assess the validity of the theoretical results. 
\end{abstract}

\section{Introduction}
The main goal of this paper is the \textit{a posteriori} numerical analysis of the monodomain model, a system of a parabolic semilinear reaction-diffusion equation coupled with a nonlinear ordinary differential equation, arising from the mathematical description of the electrical activity of the heart.
\textcolor{corr2}{The monodomain model represents a simplified version of the more realistic bidomain model which has been object in recent years of an intense research activity, see e.g. \cite{book:pavarino} and references therein. For the purpose of the paper, we first recall}
\cite{sanfelici2002convergence}, \textcolor{corr2}{where} a careful \textit{a priori} analysis of the Galerkin semidiscrete space approximation of the bidomain system is performed, investigating convergence properties and stability estimates for the semidiscrete solution. This result, coupled with the argument regarding the time-discretization analysis provided in \cite{art:collisavare}, allows for an exhaustive \textit{a priori} error analysis for the bidomain model. Moreover, in \cite{art:franzone} the authors introduce a space-time adaptive algorithm for the solution of the bidomain model by resorting to a stepsize control for the temporal adaptivity, whereas spatial adaptivity is performed by virtue of \textit{a posteriori} local error estimators. However, a complete \textit{a posteriori} error analysis is missing. 
\par
\textcolor{corr2}{With the aim of contributing to fill this gap, in this paper we focus on the simpler monodomain model and provide a detailed \textit{a posteriori} analysis.
In particular, we consider a Newton-Galerkin approximation of the monodomain system and look for \textit{a posteriori} indicators of the error involving the $L^2(0,T;H^1(\Omega))$ norm.} Inspired by the seminal work \cite{art:verfurth} and by the recent papers \cite{art:vohralik,art:amreinwihler}, we derive \textit{a posteriori} error bounds by providing a suitable splitting of the total residual into three operators, accounting for different sources of error entailed by the discretization process. Specifically, we introduce a linearization residual, a time discretization residual, and a space discretization residual, with the additional difficulty with respect, e.g., to \cite{art:amreinwihler} represented by the coupled structure of the system of differential equations.
\par
\textcolor{corr2}{ The \textit{a posteriori} analysis is complemented with an \textit{a priori} analysis which relies on previous results obtained in \cite{sanfelici1996}, where error estimates with respect to the $L^\infty(0,T;L^2(\Omega))$ norm of the error are obtained. Here, we derive \textit{a priori} estimates for the semidiscrete problem in a different norm involving the $L^2(0,T;H^1(\Omega))$ one.}
\par
\textcolor{corr2}{ The \textit{a posteriori} error estimators obtained in this paper can be employed to derive fully space-time adaptive algorithms that can be of particular importance, for instance, in the solution of inverse problems like the identification of ischemic regions}
(i.e. areas in which the coefficient of the system are altered from the reference values) by means of boundary voltage.
An iterative algorithm (as the one proposed in \cite{art:BRV} for a simplified model) would greatly benefit from an adaptive approach that would drastically reduce the computational cost.
\par
The paper is organized as follows: in Section \ref{approx} we introduce the Newton-Galerkin full discretization of the monodomain model, whereas Section \ref{apriori} is devoted to the \textit{a priori} estimates for the problem. In Section \ref{residual} we introduce the residual operators associated to the discrete solution and prove the equivalence between the error and the residual (in suitable norms). In Section \ref{estimators} we define three \textit{a posteriori} estimators and employ them to prove an upper bound for the approximation error. We also provide, under a suitable assumption, a lower estimate for the error in terms of the same indicators, thus assessing their efficiency. Finally, Section \ref{results} reports some numerical experiments assessing the validity of the derived estimates and investigating convergence rates both of the error and of the estimators as the discretization parameters are reduced.
\par

\section{A Newton-Galerkin scheme for the approximation of the monodomain model}
\sectionmark{A Newton-Galerkin scheme}
\label{approx}
Let $\Omega\subset \R^d$, $d=2,3$, be an open bounded domain. Consider the monodomain model (see \cite{book:pavarino,book:sundes-lines})
\begin{equation}
\left\{
\begin{aligned}
\partial_t u - \nabla \cdot(\EM \nabla u) + f(u,w) &= 0 \qquad &\text{in } \Omega \times (0,T), \\
\EM \partial_\normal u &= 0 \qquad &\text{on }  \partial \Omega \times (0,T), \\
u|_{t = 0} &= u_0 \qquad &\text{in }  \Omega, \\
\partial_t w + g(u,w) &= 0 \qquad &\text{in }  \Omega \times (0,T), \\
w|_{t = 0} &= w_0 \qquad &\text{in }  \Omega,
\end{aligned}
\right.
\label{eq:formaforte}
\end{equation}
being $u$ the trasmembrane electrical potential in the cardiac tissue and $\EM: \Omega \rightarrow \R^{d\times d}$ the conductivity tensor. In particular, according to the biological application, we assume that $\EM$ is constant in time, and in each point $x \in \Omega$ the tensor $\EM(x)$ is a symmetric positive definite matrix, with positive eigenvalues $\mu_i$, $i = 1, \ldots, d$. Moreover, we suppose that $\mu_i(x)$ are uniform in space and denote by $\mu_{min}$ and $\mu_{max}$ the minimum and the maximum eigenvalue, respectively. The associated eigenvectors may instead vary in space, and we assume that the overall matrix function $\EM(x)$ is smooth.
The nonlinear term $f(u,w)$ models the current induced by the motion of ions across the membrane, and is addressed as ionic current. According to a well established phenomenological approach (see, e.g., \cite{book:sundes-lines}), $f$ is a function of the potential $u$ and of a recovery variable $w$, whose dynamics is governed by a coupled nonlinear ordinary differential equation involving a nonlinear term $g$.  We focus in particular on the Aliev-Panfilov model of the cardiac tissue, according to the version reported, e.g., in \cite{art:BCP}; namely, the nonlinear terms $f$ and $g$ are as follows:
\begin{equation}
f(u,w) = A u(u-a)(u-1) + uw, \qquad g(u,w) = \epsilon (A u(u-1-a) + w),
\label{eq:AP}
\end{equation}
with $A,\epsilon > 0$, $0<a<1$.
\textcolor{corr2}{Such a problem is showed to be well-posed: in particular, we refer to \cite{art:BCR}, which extends the results contained in \cite{sanfelici1996} to the model of interest, and guarantees the following existence, uniqueness and comparison result:
\begin{proposition}
	Let the initial data $u_0 \in C^{2+\alpha}(\bar{\Omega})$, $w_0 \in C^{\alpha}(\bar{\Omega})$ satisfy the bound $0 \leq u_0 \leq 1$ and $0 \leq w_0 \leq \frac{A(1+a)^2}{4}$, consider $\EM \in C^2(\Omega)$ and let the following compatibility conditions hold: $\EM \nabla u_0 \cdot \normal = 0$, being $\partial \Omega \in C^{2+\alpha}$.	Then, there exists a unique classical solution $(u,w)$ of \eqref{eq:formaforte}, $u \in C^{2+\alpha,1+\alpha/2}(\overline{\Omega} \times [0,T])$ and $w \in C^{\alpha, 1+\alpha/2}(\overline{\Omega} \times [0,T])$. Moreover, it holds that
	\[
		0 \leq u(x,t) \leq 1, \quad 0 \leq w(x,t) \leq \frac{A(1+a)^2}{4} \qquad \forall (x,t) \in \Omega\times(0,T).
	\]
\label{prop:prel}
\end{proposition}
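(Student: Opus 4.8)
The plan is to follow the approach of \cite{sanfelici1996,art:BCR}: first establish local-in-time well-posedness of the coupled parabolic--ODE system by a fixed-point argument, then derive uniform \emph{a priori} bounds through an invariant-region / maximum-principle analysis (which simultaneously yields the comparison inequalities), and finally combine the blow-up alternative with parabolic Schauder theory to obtain a global classical solution with the stated Hölder regularity.

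First I would exploit the structure of the Aliev--Panfilov nonlinearities. Since $g(u,w)=\epsilon(Au(u-1-a)+w)$ is affine in $w$, the recovery equation integrates explicitly,
\[
 w(x,t)=e^{-\epsilon t}w_0(x)-\epsilon A\int_0^t e^{-\epsilon(t-s)}\,u(x,s)\big(u(x,s)-1-a\big)\,ds ,
\]
so that \eqref{eq:formaforte} reduces to a single semilinear parabolic equation for $u$ whose lower-order terms are polynomial in $u$ together with a time-nonlocal (Volterra-type) contribution. Freezing $u$ in these terms and invoking the classical Schauder estimates for linear parabolic problems with homogeneous Neumann data — this is where the hypotheses $\EM\in C^2$, $\partial\Omega\in C^{2+\alpha}$, $u_0\in C^{2+\alpha}$, $w_0\in C^{\alpha}$ and the compatibility condition $\EM\nabla u_0\cdot\normal=0$ enter — a contraction mapping on a short time interval produces a unique classical solution on a maximal interval $[0,T^*)$, smooth up to every compact subinterval.

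The core of the argument is showing that the rectangle $[0,1]\times[0,A(1+a)^2/4]$ is invariant. On $[0,T^*)$ the coefficients are continuous, hence bounded on compact subintervals, so the weak maximum principle applies. Writing $f(u,w)=\big(A(u-a)(u-1)+w\big)u$, the $u$-equation is linear in $u$ with bounded zeroth-order coefficient and no source term, so $u_0\ge0$ forces $u\ge0$; for $\bar u\defeq 1-u$ one checks $\partial_t\bar u-\nabla\cdot(\EM\nabla\bar u)+A u(u-a)\,\bar u=uw\ge0$ (using $u,w\ge0$), whence $\bar u\ge0$, i.e. $u\le1$. For the recovery variable I would argue by ODE comparison: since $u^2-(1+a)u\ge-(1+a)^2/4$ for every real $u$, one gets $\partial_t w\le\epsilon A(1+a)^2/4-\epsilon w$ and the constant $A(1+a)^2/4$ is a supersolution, so $w\le A(1+a)^2/4$ as soon as $w_0$ satisfies the same bound; moreover whenever $0\le u\le 1+a$ one has $u(u-1-a)\le0$, hence $\partial_t w\ge-\epsilon w$ and $w\ge w_0\,e^{-\epsilon t}\ge0$. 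The point requiring care — the main obstacle — is the mild circularity of these bounds, since $u\le1$ uses $w\ge0$ while $w\ge0$ uses $u\le1+a$; I would resolve it by a continuation argument on the largest subinterval of $[0,T^*)$ where $w\ge0$, proving it to be all of $[0,T^*)$.

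Finally, the uniform bounds confine the solution to a bounded set, so the blow-up alternative of the local theory forces $T^*$ past any prescribed $T$. A last application of the Schauder estimates to the $u$-equation, now with globally bounded Hölder data, together with differentiating the explicit formula for $w$, upgrades the regularity to $u\in C^{2+\alpha,1+\alpha/2}(\overline{\Omega}\times[0,T])$ and $w\in C^{\alpha,1+\alpha/2}(\overline{\Omega}\times[0,T])$; the $L^\infty$ bounds obtained above are precisely the asserted comparison estimates, which completes the proof.
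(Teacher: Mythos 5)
The paper does not actually prove Proposition \ref{prop:prel}: it is imported wholesale from \cite{art:BCR} (which adapts \cite{sanfelici1996} to the Aliev--Panfilov nonlinearity), so there is no in-paper argument to compare against line by line. Your sketch is a sound, self-contained reconstruction of the standard proof and, as far as one can tell, follows the same classical route as the cited references: local existence by contraction plus Schauder theory, an invariant-rectangle/maximum-principle argument for the $L^\infty$ bounds, and the blow-up alternative for globality. The individual estimates check out: $u\ge 0$ follows from writing $f(u,w)=\bigl(A(u-a)(u-1)+w\bigr)u$ as a bounded zeroth-order term; the identity $\partial_t\bar u-\nabla\cdot(\EM\nabla\bar u)+Au(u-a)\bar u=uw$ for $\bar u=1-u$ is correct; $u^2-(1+a)u\ge -(1+a)^2/4$ gives the unconditional upper barrier $w\le A(1+a)^2/4$; and the explicit Duhamel formula for $w$ is right. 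You correctly flag the only delicate point, the circularity between $u\le 1$ (which uses $w\ge0$) and $w\ge0$ (which uses $u\le 1+a$). Your continuation set is slightly awkward because $w\ge0$ need not be strict at $t=0$; it is cleaner to pivot on the strict gap $1<1+a$: on the maximal interval where $u<1+a$ one gets $w\ge0$, hence $u\le 1<1+a$, so that interval is both open and closed in $[0,T^*)$. Equivalently, one can invoke the invariant-region theorem for the rectangle $[0,1]\times[0,A(1+a)^2/4]$ after truncating the nonlinearities outside it. With that refinement your argument is complete at the level of detail expected here; the paper's choice to cite \cite{art:BCR} buys brevity, while your version makes explicit where each hypothesis ($\EM\in C^2$, $\partial\Omega\in C^{2+\alpha}$, the compatibility condition, $0<a<1$) is actually used.
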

\begin{remark}
When considering $w_0 \in C^{2+\alpha}(\bar{\Omega})$, one can easily conclude (see \cite{art:BCR}) that also  $w \in C^{2+\alpha,1+\alpha/2}(\overline{\Omega} \times [0,T])$. In particular, both $u(\cdot,t)$ and $w(\cdot,t)$ belong to the Sobolev's space $H^2(\Omega)$ for each $t \in [0,T]$.
\end{remark}
\begin{remark}
If the conductivity tensor $\EM$ is only in $L^\infty(\Omega)$ (as in the case of an ischemic heart), we can nevertheless show (\cite{art:BCR}) the existence and uniqueness of the weak solution $(u,w)$ s.t. $u \in L^2(0,T;H^1(\Omega)) \cap L^\infty(0,T;L^2(\Omega))$, $\partial_t u \in L^2(0,T;H^*)$, $w \in L^{\infty}(0,T;L^2(\Omega))$, $\partial_t w \in L^2((0,T)\times\Omega)$, where $H^*=(H^1(\Omega))^*$. Moreover, the same bounds on $u,w$ hold as above and it is possible to guarantee additional regularity on the solution, namely $u \in C^{\alpha,\alpha/2}(\overline{\Omega} \times [0,T])$, $w \in C^{\alpha, 1+\alpha/2}(\overline{\Omega} \times [0,T])$. 
\label{rem:H2}
\end{remark}
}
\par 
The weak formulation of \eqref{eq:formaforte} reads
\begin{equation}
\left\{
\begin{aligned}
	\int_\Omega \partial_t u \varphi \dx + \int_{\Omega} \EM \nabla u \cdot \nabla \varphi \dx+ \int_\Omega f(u,w)\varphi \dx &= 0 \qquad \forall \varphi\in H^1(\Omega), \\
	\int_{\Omega}\partial_t w \psi \dx + \int_\Omega g(u,w) \psi \dx &= 0 \qquad \forall \psi \in L^2(\Omega).
\end{aligned}
\right.
\label{eq11:monodomain}
\end{equation}
For each time interval $(t_a,t_b)\subset (0,T)$, we introduce the following functional spaces:
\[ 
\begin{aligned}
 X(t_a,t_b) &= \{ u \textit{ s.t. } u \in L^2(t_a,t_b;H^1(\Omega)) \cap L^\infty(t_a,t_b;L^2(\Omega)), \partial_t u \in L^2(t_a,t_b;H^*)\} \\
 Y(t_a,t_b) &= \{ w \textit{ s.t. } w \in L^\infty(t_a,t_b;L^2(\Omega)), \partial_t w \in L^2((t_a,t_b)\times\Omega)\},
\end{aligned}
\]
which are Banach spaces endowed with the norms:
\[ 
\begin{aligned}
 \norm{X(t_a,t_b)}{u} &= \left( \norm{L^2(t_a,t_b;H^1(\Omega))}{u}^2 + \norm{L^\infty(t_a,t_b;L^2(\Omega))}{u}^2 + \norm{L^2(t_a,t_b;H^*)}{\partial_t u}^2\right)^{\half} \\
 \norm{Y(t_a,t_b)}{w} &= \left( \norm{L^\infty(t_a,t_b;L^2(\Omega))}{w}^2 + \norm{L^2((t_a,t_b)\times L^2(\Omega))}{\partial_t w}^2 \right)^{\half}.
\end{aligned}
\]
To ease the notation, we denote with $X$ and $Y$ the spaces $X(0,T)$ and $Y(0,T)$, respectively.   
\par
We now introduce a time semidiscretization of the problem by employing an implicit Euler scheme: consider a partition of the time interval
\[
\{t_n\}_{n=0}^N \subset [0,T]; \quad t_0 = 0, \ t_N = T; \quad t_{n} - t_{n-1} = \tau_n > 0,
\]
and define the semidiscrete solution as the couple of collections $(\{u^n\}_{n=0}^N,\{w^n\}_{n=0}^N)$, being $u^n \in H^1(\Omega)$, $w^n \in L^2(\Omega)$ with $n=0,\ldots,N$ such that
\begin{empheq}[left = \empheqlbrace]{align}
	&\qquad u^0 = u_0; \quad w^0 = w_0;  &\\
	&\int_\Omega \frac{u^n - u^{n-1}}{\tau^n}\varphi \dx + \int_\Omega \EM \nabla u^n \cdot \nabla \varphi \dx + \int_\Omega f(u^n,w^n)\varphi \dx = 0 \qquad &\forall \varphi \in H^1(\Omega),\label{eq:IE1}\\
	&\int_\Omega \frac{w^n - w^{n-1}}{\tau^n}\psi \dx + \int_\Omega g(u^n,w^n)\psi \dx = 0 \qquad &\forall \psi \in L^2(\Omega). \label{eq:IE2}
\end{empheq}
Consider the operators $\mathcal{F}^1: H^1(\Omega)\times L^2(\Omega) \rightarrow (H^1(\Omega))^*$, $\mathcal{F}^2: H^1(\Omega)\times L^2(\Omega) \rightarrow L^2(\Omega)$, which are defined interval-wise as follows: for $t \in (t_{n-1},t_n]$
\[
\begin{aligned}
\langle \mathcal{F}^1(u,w), \varphi \rangle &=\int_\Omega \frac{u - u^{n-1}}{\tau^n}\varphi \dx + \int_\Omega \EM\nabla u \cdot \nabla \varphi \dx + \int_\Omega f(u,w)\varphi \dx \\
\langle \mathcal{F}^2(u,w),\psi \rangle &=\int_\Omega \frac{w - w^{n-1}}{\tau^n}\psi \dx + \int_\Omega g(u,w)\psi \dx.
\end{aligned}
\]
The functionals $\mathcal{F}^1$ and $\mathcal{F}^2$ are (Fréchet) differentiable with respect to the $H^1(\Omega)$ norm in the variable $u$ and with respect to the $L^2(\Omega)$ norm in the variable $w$, respectively. This allows to define a Newton scheme for the solution of the nonlinear system \eqref{eq:IE1}-\eqref{eq:IE2} as follows: 
\par
\begin{algorithm}[H]
\label{al:Newt}
\begin{algorithmic}[1]
\STATE{ Set $u_0^n = u^{n-1}$, $w_0^n = u^{n-1}$,  $k = 1$\; }
\WHILE{ \textit{exit criterion is not satisfyed}}
\STATE{compute $\delta u$, $\delta w$ by solving
\begin{equation}
\begin{bmatrix}
\mathcal{F}^1_u(u_{k-1}^n,w_{k-1}^n) & \mathcal{F}^1_w(u_{k-1}^n,w_{k-1}^n)\\
\mathcal{F}^2_u(u_{k-1}^n,w_{k-1}^n) & \mathcal{F}^2_w(u_{k-1}^n,w_{k-1}^n)
\end{bmatrix}
\begin{bmatrix}
\delta u\\
\delta w
\end{bmatrix}
=
\begin{bmatrix}
-\mathcal{F}^1(u_{k-1}^n,w_{k-1}^n)\\
-\mathcal{F}^2(u_{k-1}^n,w_{k-1}^n)
\end{bmatrix} \textit{ in } H^* \times L^2(\Omega);
\label{eq:Newt}
\end{equation}
}
\STATE{update: $u^n_{k} = u^n_{k-1} + \delta u$, $w^n_{k} = w^n_{k-1} + \delta w$, $k = k+1$\;}
\ENDWHILE
\RETURN $u^{n} = u^n_{k}$, $w^{n} = w^n_{k}$.
\end{algorithmic}
\end{algorithm}
Computing the expression of the derivatives of $\mathcal{F}^1$ and $\mathcal{F}^2$, and substituting $\delta u = u_{k}^n - u_{k-1}^n$, $\delta w = w_{k}^n - w_{k-1}^n$, the system \eqref{eq:Newt} can be rewritten as
\begin{equation}
\begin{aligned}
&\int_\Omega \frac{u_{k}^n}{\tau^n}  \varphi \dx + \int_\Omega M\nabla u_{k}^n\cdot \nabla \varphi \dx+ \int_\Omega  \left[f_u(u_{k-1}^n,w_{k-1}^n)u_{k}^n + f_w(u_{k-1}^n,w_{k-1}^n)w_{k}^n \right]\varphi \dx = \int_\Omega \frac{u^{n-1}}{\tau^n}\varphi \dx\\
& \quad + \int_\Omega  \left[ f_u(u_{k-1}^n,w_{k-1}^n)u_{k-1}^n + f_w(u_{k-1}^n,w_{k-1}^n)w_{k-1}^n - f(u_{k-1}^n,w_{k-1}^n)\right]\varphi\dx  \quad \forall \varphi \in H^1(\Omega)
\end{aligned}
\label{eq:Newt1}
\end{equation}
\begin{equation}
\begin{aligned}
&\int_\Omega \frac{ w_{k}^n}{\tau^n}\psi \dx + \int_\Omega \left[ g_u(u_{k-1}^n,u_{k-1}^n)u_{k}^n + g_w(u_{k-1}^n,u_{k-1}^n)w_{k}^n \right]\psi \dx = \int_\Omega \frac{w^{n-1}}{\tau^n}\psi \dx
\\& \quad + \int_\Omega \left[ g_u(u_{k-1}^n,w_{k-1}^n)u_{k-1}^n + g_w(u_{k-1}^n,w_{k-1}^n)w_{k-1}^n - g(u_{k-1}^n,w_{k-1}^n)\right]\psi \dx \quad \forall \psi \in L^2(\Omega).
\end{aligned}
\label{eq:Newt2}
\end{equation}
\par
\textcolor{corr2}{Following \cite{art:verfurth}, we introduce an affinely equivalent, admissible, and shape-regular tessellation $\Tau_h^n$ for each instant $t_n$. For each element $K$ of $\Tau_h^n$, we denote by $h_K$ its diameter, and require $h_K \leq h$. We moreover require the following conditions to hold:}
\begin{enumerate}[i)]
	\item $\forall n \leq 1$, there exists a common refinement $\widetilde{\mathcal{T}}_h^n$ of both $\mathcal{T}_h^n$ and $\mathcal{T}_h^{n-1}$; 
	\item $\exists \rho_*, \rho^*>0$ independent of $n$ and $h$ s.t., defined 
	\[
	\rho(K',K) = \left\{\frac{h_{K'}}{h_K},\ K' \in \mathcal{T}_h^n, \ K \in \tilde{\mathcal{T}}_h^n:\ K\subset K'\right\},
	\] 
then $\rho_* \leq \rho(K',K) \leq \rho^*$ $\forall K \in \mathcal{T}_h^n$, $\forall n=1,\cdots,N$;
\end{enumerate}
Taking advantage of $\widetilde{\mathcal{T}}_h^n$, we introduce the Finite Element discrete space $V_h^n \subset H^1(\Omega)$
\[
V_h^n = \{ v_h \in C(\bar{\Omega}), v_h|_K \in \mathbb{P}_1(K) \text{ } \forall K \in \widetilde{\mathcal{T}}_h^n \}
\]
and the $L^2$ orthogonal projection $\Pi_h^n: L^2(\Omega) \rightarrow V_h^n$.
\par
The fully discrete solution of \eqref{eq:formaforte} consists in the pair of collections $(\{u_{h,k}^n\},\{u_{h,k}^n\})$, with $n = 0,\ldots,N$ and $k = 0,\ldots,K_n$, being $K_n$ the maximum number of iterations performed in each timestep (possibly varying with $n$). In particular, $\{\uhkn\}$ and $\{\whkn\}$ are such that:
\begin{itemize}
	\item $u_{h}^0 = \Pi_h^0 u_0$, $w_{h}^0 = \Pi_h^0 w_0$, the projections of the initial data on $\widetilde{\mathcal{T}}_h^0 = \mathcal{T}_h^0$ ;
	\item for each $n = 1, \cdots, N$, we initialize the Newton algorithm with $u_{h,0}^n = \Pi_h^n \uhnm$;
	\item for each $n = 1, \cdots, N$, for each $k = 1, \cdots, K_n$, the couple  $(u_{h,k}^n,w_{h,k}^n) \in V_h^n \times V_h^n$ solves the system \eqref{eq:Newt1}-\eqref{eq:Newt2} for all $(\varphi_h, \psi_h) \in V_h^n \times V_h^n$.
\end{itemize}

\section{A priori estimates for the space semidiscretization}
\label{apriori}
In this section we consider \textit{a priori} error estimates for the space semidiscretized problem under the assumption that the same tessellation $\Tau_h$ is considered in each instant, together with the discrete space $V_h$ of linear finite elements. We refer to the space semidiscrete solution as to the couple of functions $(u_h,w_h): [0,T]\rightarrow V_h \times V_h$ satisfying $u_h(0)=u_h^0$, $w_h(0)=w_h^0$ and
\begin{equation}
\left\{
\begin{aligned}
	\int_\Omega \partial_t u_h \varphi_h \dx + \int_{\Omega} \EM \nabla u_h \cdot \nabla \varphi_h \dx + \int_\Omega f(u_h,w_h)\varphi_h \dx &= 0 \qquad \forall \varphi_h \in V_h, \\
	\int_{\Omega}\partial_t w_h \psi_h \dx+ \int_\Omega g(u_h,w_h) \psi_h \dx &= 0 \qquad \forall \psi_h \in V_h.
\end{aligned}
\right.
\label{eq11:h}
\end{equation}
Taking advantage of standard inverse estimates and approximation results (see \cite{book:brenner2007}), it is possible to prove the following result:
\begin{theorem}
There exists a unique solution $(u_h,w_h)$ of problem \eqref{eq11:h} in $(C^1(0,T;V_h))^2$. Moreover, for any fixed $h_0$ there exists a positive $\delta_0$ such that $(u_h,w_h) \in S_{\delta_0}$ $\forall x,t \in \Omega \times [0,T]$, being $S_{\delta_0} = [-\delta_0, 1+\delta_0]\times [-\delta_0,\frac{A(1+a)^2}{4}+\delta_0]$. Finally, there exists a constant $c$ depending on $u,w,u_0,w_0,f,g,\Omega,T$ and independent of $h$ such that
\begin{equation}
\left\{\norm{L^\infty(0,T;L^2(\Omega))}{u-u_h}^2 + \norm{L^\infty(0,T;L^2(\Omega))}{w-w_h}^2\right\}^\half \leq c h^2. 
\label{eq:apriori1}
\end{equation}
\label{th:apriori1}
\end{theorem}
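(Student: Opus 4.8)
The plan is to derive the $a\ priori$ estimate \eqref{eq:apriori1} by a standard energy argument for the error, following the classical template for semilinear parabolic problems (see \cite{sanfelici1996}, \cite{book:brenner2007}), with the extra care demanded by the coupling with the ODE for $w$ and by the cubic nonlinearity $f$. First I would establish existence, uniqueness, and the $C^1(0,T;V_h)$ regularity of $(u_h,w_h)$: since \eqref{eq11:h} is, after choosing a basis of $V_h$, a system of ODEs with a polynomial (hence locally Lipschitz) right-hand side, Picard--Lindelöf gives a unique local solution, and a priori bounds (obtained by testing with $u_h$ and $w_h$ and exploiting the sign/growth structure of $f,g$ coming from the Aliev--Panfilov form \eqref{eq:AP}) prevent blow-up, yielding a global solution. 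The invariant-region statement $(u_h,w_h)\in S_{\delta_0}$ for $h\le h_0$ is the discrete counterpart of the comparison principle in Proposition~\ref{prop:prel}; I would obtain it via a discrete maximum-principle-type argument or, more robustly, by combining the continuous bounds $0\le u\le 1$, $0\le w\le \frac{A(1+a)^2}{4}$ with an $L^\infty$ error estimate (inverse inequality applied to the $H^1$ error bound, or a direct $L^\infty$ argument), choosing $\delta_0$ to absorb the $O(h^{2-d/2})$ slack.

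For the error bound itself, set $e_u = u-u_h$, $e_w=w-w_h$ and split each via the elliptic/Ritz (or $L^2$) projection $R_h$ onto $V_h$: $e_u = (u-R_h u) + (R_h u - u_h) =: \rho_u + \theta_u$, and similarly for $w$ with the $L^2$-projection. The projection parts $\rho_u,\rho_w$ are controlled by standard approximation theory, $\|\rho_u\|_{L^2} + \|\rho_w\|_{L^2}\lesssim h^2$ (using the $H^2$-regularity of $u,w$ noted in Remark~\ref{rem:H2}). For the discrete error $(\theta_u,\theta_w)\in V_h\times V_h$, I would write the error equation by subtracting \eqref{eq11:h} from the weak formulation \eqref{eq11:monodomain} tested against $\varphi_h,\psi_h\in V_h$, then test with $\varphi_h=\theta_u$ and $\psi_h=\theta_w$. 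This produces $\frac12\frac{d}{dt}(\|\theta_u\|_{L^2}^2+\|\theta_w\|_{L^2}^2) + \mu_{min}\|\nabla\theta_u\|_{L^2}^2 \le (\text{projection-consistency terms}) + (\text{nonlinear terms})$. The consistency terms involve $\partial_t\rho_u$, $\partial_t\rho_w$ and (for the $L^2$ projection choice) a term with $\nabla\rho_u$; these are bounded by $Ch^2(\|\partial_t u\|_{H^2}+\dots)$ and absorbed.

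The main obstacle is the nonlinear terms $\int_\Omega (f(u,w)-f(u_h,w_h))\theta_u\,\dx$ and $\int_\Omega (g(u,w)-g(u_h,w_h))\theta_w\,\dx$. The point is that $f,g$ are \emph{not} globally Lipschitz, but on the bounded region $S_{\delta_0}$ — where both $(u,w)$ and $(u_h,w_h)$ live for $h\le h_0$ — they are Lipschitz, with a constant $L$ depending only on $A,a,\epsilon,\delta_0$. Hence $|f(u,w)-f(u_h,w_h)|\le L(|e_u|+|e_w|)\le L(|\rho_u|+|\theta_u|+|\rho_w|+|\theta_w|)$, and Cauchy--Schwarz plus Young's inequality bounds these contributions by $C(\|\rho_u\|_{L^2}^2+\|\rho_w\|_{L^2}^2+\|\theta_u\|_{L^2}^2+\|\theta_w\|_{L^2}^2) + \frac{\mu_{min}}{2}\|\nabla\theta_u\|_{L^2}^2$; one subtlety is the term $\int u\,e_w\,\theta_u$ from $f(u,w)-f(u_h,w_h)$ containing $uw-u_hw_h = u e_w + w_h e_u$, handled the same way using $|u|,|w_h|\le C$. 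Collecting everything gives
\begin{equation}
\frac{d}{dt}\big(\|\theta_u\|_{L^2}^2+\|\theta_w\|_{L^2}^2\big) \le C\big(\|\theta_u\|_{L^2}^2+\|\theta_w\|_{L^2}^2\big) + C h^4,
\label{eq:gronwall-ready}
\end{equation}
with $\theta_u(0),\theta_w(0)$ of size $O(h^2)$ since $u_h^0,w_h^0$ are projections of $u_0,w_0$. Grönwall's lemma then yields $\|\theta_u\|_{L^\infty(0,T;L^2)}^2+\|\theta_w\|_{L^\infty(0,T;L^2)}^2\le C(T) h^4$, and the triangle inequality with the projection estimates concludes \eqref{eq:apriori1}. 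Note this argument is mildly circular as stated — the invariant-region claim is used to get the Lipschitz bound, and an $L^\infty$ error estimate may be used to get the invariant-region claim — so I would actually run it as a bootstrap/continuation argument in $t$: on any interval where $(u_h,w_h)\in S_{\delta_0}$, the energy estimate holds and keeps $\theta_u,\theta_w$ (hence, via inverse estimates, $u_h,w_h$) within the region, so the good set is open and closed and equals $[0,T]$.
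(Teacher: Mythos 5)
Your proposal is essentially the argument the paper relies on: the paper gives no proof of Theorem~\ref{th:apriori1}, deferring instead to the techniques of \cite{thomeewahlbin} and to an adaptation of \cite[Theorem 4.4]{sanfelici1996}, and that standard route is exactly yours --- ODE well-posedness in $V_h$, an invariant-region/bootstrap argument to secure local Lipschitz bounds on $S_{\delta_0}$, a projection splitting $e_u=\rho_u+\theta_u$, and Gr\"onwall. One caveat: to reach the optimal $O(h^2)$ rate you must commit to the elliptic (Ritz) projection for the $u$-component, since with the $L^2$ projection the consistency term $\int_\Omega \EM\nabla(u-\Pi_h u)\cdot\nabla\theta_u\,\dx$ is only $O(h)\norm{L^2(\Omega)}{\nabla\theta_u}$ and after Young's inequality degrades the final bound to $O(h)$; your parenthetical claim that this term is also $O(h^2)$ is incorrect, but your primary (Ritz) choice makes the term vanish and the proof goes through.
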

\textcolor{corr2}{The proof of this theorem relies on techniques introduced in \cite{thomeewahlbin}: with minor modifications, it is possible to adapt the proof of \cite[Theorem 4.4]{sanfelici1996} to the present context where the Aliev-Panfilov electrophysiological model is considered.}
We are moreover interested in establishing the convergence rate of the $X$ and $Y$ norms of the error. This is the object of the following result:
\begin{theorem}
There exists a constant $c$ depending on $u,w,u_0,w_0,f,g,\Omega,T$ and independent of $h$ such that
\begin{equation}
\left\{\norm{X(0,T)}{u-u_h}^2 + \norm{Y(0,T)}{w-w_h}^2\right\}^\half \leq c h. 
\label{eq:apriori2}
\end{equation}
\label{th:apriori2}
\end{theorem}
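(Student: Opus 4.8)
The plan is to derive the $H^1$-in-space estimate by testing the error equations with the error itself (rather than with $L^2$ projections of the error, which is what produces the $L^\infty(L^2)$ bound of Theorem \ref{th:apriori1}), and then to leverage Theorem \ref{th:apriori1} to control the remaining nonlinear and projection terms. Write $e_u = u - u_h$, $e_w = w - w_h$ and split $e_u = (u - \Pi_h u) + (\Pi_h u - u_h) \eqdef \eta_u + \theta_u$, and similarly $e_w = \eta_w + \theta_w$, where $\Pi_h$ is the $L^2$ projection onto $V_h$. The components $\eta_u,\eta_w$ are handled by standard approximation theory: since Proposition \ref{prop:prel} and Remark \ref{rem:H2} give $u(\cdot,t), w(\cdot,t) \in H^2(\Omega)$ uniformly in $t$, we have $\norm{L^2(0,T;H^1(\Omega))}{\eta_u} + \norm{L^2(0,T;L^2(\Omega))}{\eta_w} \lesssim h$ and $\norm{L^2(0,T;L^2(\Omega))}{\partial_t\eta_w} \lesssim h$ (using also $\partial_t u, \partial_t w \in L^2(0,T;H^1(\Omega))$, which follows from differentiating the system in time). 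So it remains to bound $\theta_u$ in the $X$ norm and $\theta_w$ in the $Y$ norm.

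For $\theta_u,\theta_w$, subtract \eqref{eq11:h} from the weak formulation \eqref{eq11:monodomain} restricted to test functions in $V_h$, insert the splitting, and test the $u$-equation with $\varphi_h = \theta_u$ and the $w$-equation with $\psi_h = \theta_w$. Using $\langle \partial_t\eta_u, \theta_u\rangle = 0$ (orthogonality of the $L^2$ projection against $\partial_t(u-\Pi_h u)$ — more precisely one uses $\frac{d}{dt}\int \eta_u \theta_u$ and that $\int \eta_u \varphi_h = 0$), the parabolic term gives $\frac12\frac{d}{dt}\norm{L^2(\Omega)}{\theta_u}^2 + \mu_{min}\norm{L^2(\Omega)}{\nabla\theta_u}^2$ on the left. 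On the right one collects: the diffusion cross term $\int M\nabla\eta_u\cdot\nabla\theta_u$, absorbed by Young's inequality into $\norm{\nabla\theta_u}^2$ at the cost of $\norm{\nabla\eta_u}^2$; and the nonlinear differences $\int (f(u,w) - f(u_h,w_h))\theta_u$ and $\int(g(u,w) - g(u_h,w_h))\theta_w$. Here the key point is that by Theorem \ref{th:apriori1} both the exact and discrete solutions lie in the compact set $S_{\delta_0}$, so $f$ and $g$ — being smooth (in fact polynomial, by \eqref{eq:AP}) — are Lipschitz there; hence $|f(u,w) - f(u_h,w_h)| \lesssim |e_u| + |e_w| \leq |\eta_u| + |\theta_u| + |\eta_w| + |\theta_w|$, and likewise for $g$. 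After Cauchy–Schwarz and Young, these contribute $C(\norm{L^2}{\theta_u}^2 + \norm{L^2}{\theta_w}^2)$ plus the data terms $C(\norm{L^2}{\eta_u}^2 + \norm{L^2}{\eta_w}^2)$. For the $w$-equation, $\int \partial_t\eta_w \theta_w$ does \emph{not} vanish (test space mismatch is not an issue, but $\partial_t\eta_w$ need not be orthogonal to $V_h$), so it is kept and bounded by $\frac12\norm{L^2}{\partial_t\eta_w}^2 + \frac12\norm{L^2}{\theta_w}^2$; this gives $\frac12\frac{d}{dt}\norm{L^2(\Omega)}{\theta_w}^2$ on the left. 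Adding the two identities, absorbing, and applying Grönwall's inequality on $[0,T]$ — with initial value $\theta_u(0) = \theta_w(0) = 0$ since $u_h^0 = \Pi_h^0 u_0$, $w_h^0 = \Pi_h^0 w_0$ — yields
\[
\norm{L^\infty(0,T;L^2(\Omega))}{\theta_u}^2 + \norm{L^2(0,T;H^1(\Omega))}{\theta_u}^2 + \norm{L^\infty(0,T;L^2(\Omega))}{\theta_w}^2 \lesssim h^2,
\]
where on the right we have used the approximation bounds for $\eta_u,\eta_w,\partial_t\eta_w$ together with Theorem \ref{th:apriori1} (which bounds $\norm{L^2(L^2)}{\theta_u}^2, \norm{L^2(L^2)}{\theta_w}^2 \lesssim h^4 \lesssim h^2$, feeding the Grönwall source).

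It remains to recover the time-derivative norms $\norm{L^2(0,T;H^*)}{\partial_t e_u}$ and $\norm{L^2(0,T;L^2(\Omega))}{\partial_t e_w}$. For $\partial_t e_u$, take any $\varphi \in H^1(\Omega)$ with $\norm{H^1(\Omega)}{\varphi} \leq 1$, write $\varphi = (\varphi - \Pi_h\varphi) + \Pi_h\varphi$, use the error equation on the $\Pi_h\varphi$ part and the original PDE \eqref{eq11:monodomain} on the $(\varphi - \Pi_h\varphi)$ part; the diffusion and reaction terms are controlled by $\norm{H^1}{\nabla e_u} + \norm{L^2}{e_u} + \norm{L^2}{e_w}$ (Lipschitz again) plus an $h\norm{H^2}{u}$ term from $\int M\nabla u\cdot\nabla(\varphi - \Pi_h\varphi)$ and $\int f(u,w)(\varphi-\Pi_h\varphi)$. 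Squaring, integrating in $t$, and using the already-established $O(h)$ bound on $\norm{L^2(H^1)}{e_u}$ and $O(h)$ bounds on $\norm{L^2(L^2)}{e_u},\norm{L^2(L^2)}{e_w}$ closes it. For $\partial_t e_w = \partial_t\eta_w + \partial_t\theta_w$: $\partial_t\eta_w$ is $O(h)$ in $L^2(L^2)$ as above; and testing the $w$-error equation with $\psi_h = \partial_t\theta_w \in V_h$ gives $\norm{L^2(L^2)}{\partial_t\theta_w}^2 \lesssim \norm{L^2(L^2)}{\partial_t\eta_w}^2 + \norm{L^2(L^2)}{e_u}^2 + \norm{L^2(L^2)}{e_w}^2 \lesssim h^2$. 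Collecting $\eta + \theta$ contributions for all four norms gives \eqref{eq:apriori2}.

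The main obstacle I anticipate is the bookkeeping around the non-vanishing term $\int_\Omega \partial_t\eta_w \theta_w$ and, relatedly, obtaining the $O(h)$ (not merely $O(h^{1/2})$) rate for $\norm{L^2(L^2)}{\partial_t\eta_w}$: this requires the extra time-regularity $\partial_t w \in L^2(0,T;H^1(\Omega))$, which must be justified by differentiating the ODE $\partial_t w = -g(u,w)$ in time and using $u \in C^{2+\alpha,1+\alpha/2}$, $w \in H^2$ from Remark \ref{rem:H2} — so that $\partial_t w$ inherits $H^1$-in-space regularity, and hence $\norm{H^1}{(\partial_t w)(\cdot,t) - \Pi_h\partial_t w(\cdot,t)}$... wait, we only need $L^2$ approximation of $\partial_t w$: $\norm{L^2}{\partial_t\eta_w} = \norm{L^2}{\partial_t w - \Pi_h\partial_t w} \lesssim h\norm{H^1}{\partial_t w}$, which is exactly what the extra regularity supplies. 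A secondary subtlety is that everything hinges on the a priori $L^\infty$ bound $(u_h,w_h)\in S_{\delta_0}$ from Theorem \ref{th:apriori1} to make the polynomial nonlinearities globally Lipschitz along the solutions; without it the cubic term in $f$ would obstruct the linear-in-$h$ estimate.
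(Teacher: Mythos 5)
Your proposal is correct and follows the same overall strategy as the paper's proof: an energy argument built on the $L^2$-orthogonal projection, with the nonlinear differences controlled by the Lipschitz continuity of $f,g$ on the region $S_{\delta_0}$ supplied by Theorem \ref{th:apriori1}, followed by Gr\"onwall and a separate duality argument for the time derivatives. The one place where you genuinely diverge is the estimate of $\norm{L^2(0,T;H^*)}{\partial_t(u-u_h)}$. The paper introduces the elliptic (Ritz) projection $R_h$ associated with the bilinear form $\int_\Omega M\nabla u\cdot\nabla v\,\dx+\mu_{min}\int_\Omega uv\,\dx$ and splits the test function as $\varphi=(\varphi-R_h\varphi)+R_h\varphi$: by construction the troublesome term $\int_\Omega M\nabla u_h\cdot\nabla(\varphi-R_h\varphi)\,\dx$ collapses to the lower-order term $\mu_{min}\int_\Omega u_h(\varphi-R_h\varphi)\,\dx$, at the price of having to prove separately that $\norm{L^2((0,T)\times\Omega)}{\partial_t u_h}$ is bounded uniformly in $h$ (which the paper does by testing the discrete equation with $\partial_t u_h$). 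Your split $\varphi=(\varphi-\Pi_h\varphi)+\Pi_h\varphi$ avoids that uniform bound entirely, since $\langle\partial_t u_h,\varphi-\Pi_h\varphi\rangle=0$ exactly, and handles $\int_\Omega M\nabla u\cdot\nabla(\varphi-\Pi_h\varphi)\,\dx$ by integration by parts using $u(\cdot,t)\in H^2(\Omega)$ and the Neumann condition. The price you pay is the bound $\norm{L^2(\Omega)}{\nabla\Pi_h\varphi}\lesssim\norm{H^1(\Omega)}{\varphi}$, needed for the term $\int_\Omega M\nabla(u-u_h)\cdot\nabla\Pi_h\varphi\,\dx$, i.e.\ the $H^1$-stability of the $L^2$ projection; this holds on quasi-uniform (or suitably graded) meshes but is not guaranteed by shape-regularity alone, so you should either state that mesh hypothesis or replace $\Pi_h\varphi$ by a Ritz or Cl\'ement-type quasi-interpolant in that step. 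Everything else --- the $\eta/\theta$ splitting, the treatment of $\int_\Omega\partial_t(w-\Pi_h w)\,\theta_w\,\dx$ via the observation that $\partial_t w=-g(u,w)$ inherits $H^1(\Omega)$ regularity, and the recovery of $\partial_t(w-w_h)$ by testing with $\partial_t\theta_w$ --- is sound and, where the paper is terse (``an analogous argument holds''), your version actually supplies the details.
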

\begin{proof}
Consider the equations of system \eqref{eq11:monodomain}, test them with the functions $u_h-\varphi_h$ and $w_h-\psi_h$ respectively, being $\varphi_h$, $\psi_h \in V_h$, and sum them. Repeating the same procedure on system \eqref{eq11:h} and subracting the two equations obtained, we get
\begin{equation}
\begin{aligned}
\half \frac{d}{dt} &\left( \norm{L^2(\Omega)}{u-u_h}^2 + \norm{L^2(\Omega)}{w-w_h}^2 \right) + \norm{L^2(\Omega)}{\nabla(u-u_h)}^2 = \int_\Omega \partial_t(u-u_h) (u-\varphi_h)\dx \\
&+ \int_\Omega \partial_t(w-w_h) (w-\psi_h)\dx + \int_\Omega \nabla(u-u_h)\cdot \nabla(u-\varphi_h)\dx \\
&+ \int_\Omega (f(u,w)-f(u_h,w_h)(u_h - \varphi_h)\dx + \int_\Omega (g(u,w)-g(u_h,w_h)(w_h - \psi_h)\dx.
\end{aligned}
\label{eq:Quadrato}
\end{equation}
Consider now $\varphi_h = \Pi_h u$ and $\psi_h = \Pi_h w$, being $\Pi_h$ the $L^2$ orthogonal projection on $V_h$ operator, and observe that 
\[
\begin{aligned}
\int_\Omega \partial_t(u-u_h) (u-\Pi_h u) dx &= \int_\Omega \partial_t(u-\Pi_h u) (u-\Pi_h u)dx + \int_\Omega \partial_t(\Pi_h u - u_h) (u-\Pi_h u) dx\\
&= \half \frac{d}{dt}\norm{L^2(\Omega)}{u-\Pi_h u}^2.
\end{aligned}
\]
A similar result hold for the second term on the rihgt-hand side of \eqref{eq:Quadrato}. By Cauchy-Schwarz and Young inequalities, we conclude that
\[
\begin{aligned}
\half \frac{d}{dt} &\left( \norm{L^2(\Omega)}{u-u_h}^2 + \norm{L^2(\Omega)}{w-w_h}^2 \right) + \half \norm{L^2(\Omega)}{\nabla (u-u_h)}^2 = \\
& \half \frac{d}{dt} \left( \norm{L^2(\Omega)}{u-\Pi_h u}^2 + \norm{L^2(\Omega)}{w-\Pi_h w}^2 \right) + \half \norm{L^2(\Omega)}{\nabla (u-\Pi_h u)}^2 + |E_{\Pi_h}|,
\end{aligned}
\]
being 
\[
E_{\Pi_h} = \int_\Omega (f(u,w)-f(u_h,w_h))(u_h - \Pi_h u)\dx + \int_\Omega (g(u,w)-g(u_h,w_h))(w_h - \Pi_h w)\dx.
\]
Integrating from $0$ to $t$, and employing the fundamental theorem of calculus, together with the choice $u_h(0) = u_{h}^0 = \Pi_h u_0$, $w_h(0) = w_{h}^0 = \Pi_h w_0$, we get
\begin{equation}
\begin{aligned}
&\norm{L^2(\Omega)}{u(t)-u_h(t)}^2 + \norm{L^2(\Omega)}{w(t)-w_h(t)}^2 + \int_0^t \norm{L^2(\Omega)}{\nabla (u(s)-u_h(s))}^2ds \\
& \quad \leq \norm{L^2(\Omega)}{u_0 - \Pi_h u_0}^2 + \norm{L^2(\Omega)}{w_0 - \Pi_h w_0}^2 + \norm{L^2(\Omega)}{u(t) - \Pi_h u(t)}^2 + \norm{L^2(\Omega)}{w(t) - \Pi_h w(t)}^2 \\
& \quad + \int_0^t \norm{L^2(\Omega)}{\nabla(u(s) - \Pi_h u(s))}^2 ds + 2 \int_0^t |E_{\Pi_h}(s)|ds.
\end{aligned}
\label{eq:aux}
\end{equation}
It immediately follows that 
\[
\begin{aligned}
&\norm{L^\infty(0,T;L^2(\Omega))}{u-u_h}^2 + \norm{L^\infty(0,T;L^2(\Omega))}{w-w_h}^2 \leq \norm{L^2(\Omega)}{u_0 - \Pi_h u_0}^2 + \norm{L^2(\Omega)}{w_0 - \Pi_h w_0}^2 \\ 
& \quad + \norm{L^\infty(0,T;L^2(\Omega))}{u-\Pi_h u}^2 + \norm{L^\infty(0,T;L^2(\Omega))}{w-\Pi_h w}^2 + \norm{L^2(0,T;H^1(\Omega))}{u-\Pi_h u}^2 \\
& \quad + 2 \int_0^T |E_{\Pi_h}(s)|ds.
\end{aligned}
\] 
Now, we observe that, since both $(u,w)$ and $(u_h,w_h)$ belong to $S_{\delta_0}$ for a suitable value of $\delta_0$ (see Theorem \ref{th:apriori1}) and since the functions $f,g$ are Lipschitz continuous on $S_{\delta_0}$ with constants bounded by $c_{\delta_0} >0$, it holds
\[
\begin{aligned}
\int_0^T |E_{\Pi_h}(s)|ds \leq & c_{\delta_0}  \int_0^T \int_\Omega (u-u_h)(u-\Pi_h u)+(w-w_h)(u-\Pi_h u)\dx ds \\
& \quad + c_{\delta_0} \int_0^T \int_\Omega (u-u_h)(w-\Pi_h w) + (w-w_h)(w-\Pi_h w)\dx ds \\
\leq & c_{\delta_0} \int_0^T \left( \norm{L^2(\Omega)}{u-\Pi_h u} + \norm{L^2(\Omega)}{w-\Pi_h w} + 2\int_\Omega (u-\Pi_h u)(w-\Pi_h w)\dx \right) ds \\
\leq & 2c_{\delta_0}T \left( \norm{L^\infty(0,T;L^2(\Omega))}{u-\Pi_h u}^2 + \norm{L^\infty(0,T;L^2(\Omega))}{w-\Pi_h w}^2 \right).
\end{aligned}
\]
In conclusion, we have
\[
\begin{aligned}
&\norm{L^\infty(0,T;L^2(\Omega))}{u-u_h}^2 + \norm{L^\infty(0,T;L^2(\Omega))}{w-w_h}^2 \lesssim \norm{L^2(\Omega)}{u_0 - \Pi_h u_0}^2 + \norm{L^2(\Omega)}{w_0 - \Pi_h w_0}^2 \\ 
& \quad + \norm{L^\infty(0,T;L^2(\Omega))}{u-\Pi_h u}^2 + \norm{L^\infty(0,T;L^2(\Omega))}{w-\Pi_h w}^2 + \norm{L^2(0,T;H^1(\Omega))}{u-\Pi_h u}^2.
\end{aligned}
\] 
\textcolor{corr2}{
Applying standard approximation properties of $V_h$, taking advantage of the fact that both $u(\cdot,t)$ and $w(\cdot,t)$ belong to $H^2(\Omega)$ for $t \in [0,T]$ (see Remark \ref{rem:H2}), we can conclude that the following suboptimal estimate holds:
\[
\left\{\norm{L^\infty(0,T;L^2(\Omega))}{u-u_h}^2 + \norm{L^\infty(0,T;L^2(\Omega))}{w-w_h}^2\right\}^\half \leq c h.
\] 
}
In view of this estimate, from \eqref{eq:aux} we infer that $\norm{L^2(0,T;H^1(\Omega))}{u-u_h} \leq c h$. 
\par 
\textcolor{corr2}{
To conclude, we need to consider the terms involving the derivative in time. 
This requires the introduction of the elliptic projection operator associated to the bilinear form $\int_\Omega M \nabla u \cdot \nabla v + \mu_{min} \int_\Omega uv$, i.e., the map $R_h: H^1(\Omega) \rightarrow V_h$ such that
\begin{equation}
\int_\Omega M (u -\nabla R_h u) \cdot \nabla \varphi_h \dx + \mu_{min} \int_\Omega (u - R_h u)\varphi_h \dx = 0 \qquad \forall \varphi_h \in V_h.
\label{eq:Rh}
\end{equation}
According to the properties of $R_h$ (see, e.g., \cite{book:thomee}), we know that $\forall u \in H^1(\Omega)$ it holds
\begin{equation}
	\norm{L^2(\Omega)}{u - R_h u} \leq h \norm{H^1(\Omega)}{u}.
\label{eq:nitsche}
\end{equation}	
By employing the first equation in system \eqref{eq11:h}, for each $\varphi \in H^1(\Omega)$ it holds
\[
\begin{aligned}
\langle \partial_t u_h, \varphi \rangle &= \int_\Omega \partial_t u_h (\varphi-R_h \varphi)\dx + \int_\Omega \partial_t u_h\ R_h \varphi \dx \\
&= \int_\Omega \partial_t u_h (\varphi-R_h \varphi)\dx - \int_\Omega \EM \nabla u_h \cdot \nabla R_h \varphi \dx - \int f(u_h,w_h)R_h \varphi\dx.
\end{aligned}
\]
According to \eqref{eq11:monodomain}, and in view of \eqref{eq:Rh}, we can conclude that $\forall \varphi \in H^1(\Omega)$
\[
\begin{aligned}
\langle \partial_t (u-u_h), \varphi \rangle = & - \int_\Omega \EM \nabla(u-u_h)\cdot \nabla \varphi \dx - \int_\Omega (f(u,w)-f(u_h,w_h)) \varphi\dx \\& - \int_\Omega \EM \nabla u_h \cdot \nabla (\varphi - R_h \varphi)\dx - \int_\Omega f(u_h,w_h)(\varphi - R_h \varphi)\dx - \int_\Omega \partial_t u_h(\varphi - R_h \varphi)\dx \\
= & - \int_\Omega \EM \nabla(u-u_h)\cdot \nabla \varphi \dx - \int_\Omega (f(u,w)-f(u_h,w_h)) \varphi \dx \\& + \mu_{min} \int_\Omega u_h (\varphi - R_h \varphi)\dx - \int_\Omega f(u_h,w_h)(\varphi - R_h \varphi)\dx - \int_\Omega \partial_t u_h(\varphi - R_h \varphi)\dx.
\end{aligned}
\]
}
{
Via Cauchy-Schwarz inequality we obtain
\[
\begin{aligned}
\langle \partial_t (u-u_h), \varphi \rangle \leq & \mu_{max} \norm{H^1(\Omega)}{u-u_h}\norm{H^1(\Omega)}{\varphi} + K_f (\norm{L^2(\Omega)}{u-u_h} + \norm{L^2(\Omega)}{w-w_h})\norm{L^2(\Omega)}{\varphi} \\
& + \mu_{min} \norm{L^2(\Omega)}{u_h} \norm{L^2(\Omega)}{\varphi - R_h \varphi} + \vert\Omega\vert^\half \norm{L^\infty(\Omega)}{f(u_h,w_h)}\norm{L^2(\Omega)}{\varphi - R_h \varphi} \\
& + \norm{L^2(\Omega)}{\partial_t u_h}\norm{L^2(\Omega)}{\varphi - R_h \varphi}.
\end{aligned}
\]
Now, we show that $\norm{L^2((0,T)\times\Omega)}{\partial_t u_h}$ is bounded by a constant independent of $h$. Indeed, considering $\varphi_h = \partial_t u_h$ in the first equation of \eqref{eq11:h}, we obtain
\[
\norm{L^2(\Omega)}{\partial_t u_h}^2 + \frac{d}{dt}\norm{L^2(\Omega)}{\sqrt{M}\nabla u_h}^2 + \int_\Omega f(u_h,w_h) \partial_t u_h \dx = 0.
\]
Integrating from $0$ to $T$, we get
\[
\norm{L^2((0,T)\times\Omega)}{\partial_t u_h}^2 + \norm{L^2(\Omega)}{\sqrt{M}\nabla u_h(\cdot,T)}^2 - \norm{L^2(\Omega)}{\sqrt{M}\nabla u_{h,0}}^2 + \int_0^T \int_\Omega f(u_h,w_h) \partial_t u_h \dx \ dt =0.
\]
Thus, it holds that
\[
\norm{L^2((0,T)\times\Omega)}{\partial_t u_h}^2 \leq \norm{L^2((0,T)\times\Omega)}{f(u_h,w_h)}\norm{L^2((0,T)\times\Omega)}{\partial_t u_h} + \mu_{max}\norm{H^1(\Omega)}{u_{h,0}}^2,
\]
and by solving the second-order inequality, we conclude that 
\begin{equation}
{\color{black}
\norm{L^2((0,T)\times\Omega)}{\partial_t u_h} \leq \half \left( \norm{L^2((0,T)\times\Omega)}{f(u_h,w_h)} + \left( \norm{L^2((0,T)\times\Omega)}{f(u_h,w_h)}^2 + 4\mu_{max}\norm{H^1(\Omega)}{u_{h,0}}^2 \right)^\half\right) \leq C.}
\label{eq:uhtL2}
\end{equation}
In view of \eqref{eq:uhtL2}, employing \eqref{eq:nitsche}, the above  estimate for $\norm{L^2(0,T;H^1(\Omega))}{u-u_h}$, together with the estimates for $\norm{L^{\infty}(0,T;L^2(\Omega))}{u-u_h}$ and $\norm{L^{\infty}(0,T;L^2(\Omega))}{w-w_h}$ in Theorem \ref{th:apriori1}, we get
\[
\norm{L^2(0,T;H^*)}{\partial_t (u - u_h)} \leq c h.
\] 
}
An analogous argument holds for $\norm{L^2((0,T)\times \Omega)}{\partial_t(w-w_h)}$, and the thesis follows.
\end{proof}

\begin{remark}
When stating the discrete problem \eqref{eq11:h}, we have neglected any error introduced by the computation of the integral $\int_\Omega M \nabla u_h \cdot \nabla w_h \dx$. When $M$ is a polynomial function, the integration can be performed exactly by choosing a suitable quadrature rule. In case $M$ is not a polynomial but still sufficiently smoot (e.g., $M \in C^{1+\alpha}(\Omega)$), the quadrature error do not affect the results contained in Theorems \ref{th:apriori1} and \ref{th:apriori2}, as can be verified by an application of Strang's lemma. When considering the case of a piecewse smooth coefficient $M$ (which occurs, e.g., when modeling an ischemic cardiac tissue), one should adopt a different strategy, as suggested, e.g., in \cite{chenzou}.
\end{remark}


\section{Residual operators}
\label{residual}
We now move towards the introduction of \textit{a posteriori} estimators. Consider the fully discrete solution $(\{u_{h,k}^n\},\{w_{h,k}^n\})$ as introduced in Section 1, being again $\{\Tau_h^n\}_n$ possibly different tessellations among the different discrete instants. Collecting all the final indices $K_n$ in a multi-index $\mathbf{k} = [K_n]_{n=1}^N$, the associated linear interpolated solution $(\uht,\wht)$ is a couple of continuous functions on $[0,T]$, defined timestep-wise as follows: for each $t \in (t_{n-1},t_n]$, $n  = 1,\ldots,N$,
\begin{equation}
\uht = \frac{t-t_{n-1}}{\tau_n}\uhkn + \frac{t_n - t}{\tau_n}\uhnm, \qquad \wht = \frac{t-t_{n-1}}{\tau_n}\whkn + \frac{t_n - t}{\tau_n}\whnm.
\label{eq:interpolants}
\end{equation}
We now define for almost each instant $t$ the \textit{residual} operator $R(t)$ in the product space $(H^1(\Omega) \times L^2(\Omega))^* = H^* \times L^2(\Omega)$, being $H^*$ the dual space of $H^1(\Omega)$:
\begin{equation}
\begin{aligned}
\langle R(t),(\varphi,\psi) \rangle = & \langle R_1(t),\varphi \rangle + \langle R_2(t),\psi \rangle \qquad \forall \varphi \in H^1(\Omega), \psi \in L^2(\Omega)\\
\langle R_1(t),\varphi \rangle = & - \int_\Omega \partial_t \uht \varphi \dx - \int_\Omega \EM \nabla \uht \cdot \nabla \varphi \dx - \int_\Omega f(\uht,\wht)\varphi \dx \\
\langle R_2(t),\psi \rangle = & - \int_\Omega \partial_t \wht \psi\dx - \int_\Omega g(\uht,\wht)\psi \dx.
\end{aligned}
\label{eq:res}
\end{equation} 
\par
It is now possible to prove a result of equivalence between the $X,Y$ norms of the error and the dual norms of the residual operators. More precisely, it holds:
\begin{theorem}
The functions $\norm{H^*}{R_1(t)}$ and $\norm{L^2(\Omega)}{R_2(t)}$ are square integrable on each interval $(t_a,t_b) \subset (0,T)$, and moreover
\begin{subequations}
\begin{equation}
\left\{ \norm{L^2(t_a,t_b,H^*)}{R_1}^2 + \norm{L^2((t_a,t_b)\times\Omega)}{R_2}^2 \right\}^\half \leq c^* \left\{ \norm{X(t_a,t_b)}{u - \uht}^2 + \norm{Y(t_a,t_b)}{w-\wht}^2 \right\}^\half \\
\label{eq:A}
\end{equation}
\begin{equation}
\begin{aligned}
c_* \left\{\norm{X(0,t)}{u - \uht}^2 + \norm{Y(0,t)}{w-\wht}^2 \right\}^\half  &\leq \left\{ \norm{L^2(\Omega)}{u_0 - \Pi_h^0 u_0}^2 + \norm{L^2(\Omega)}{w_0 - \Pi_h^0 w_0}^2 \right.\\
&\quad + \left. \norm{L^2(0,t,H^*)}{R_1}^2 + \norm{L^2((0,t)\times\Omega)}{R_2}^2\right\}^\half, 
\end{aligned}
\label{eq:B}
\end{equation}
\end{subequations}
where $c_*$ and $c^*$ depend on $\Omega,\mu_{max},\mu_{min},f,g$ and $T$.
\label{th11:1}
\end{theorem}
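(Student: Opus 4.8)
The starting point is to observe that, because the exact pair $(u,w)$ satisfies the weak formulation \eqref{eq11:monodomain}, the residual is nothing but the weak form of the error equation: setting $e_u \defeq u-\uht$ and $e_w \defeq w-\wht$, one has for a.e.\ $t$
\begin{align*}
\langle R_1(t),\varphi\rangle &= \int_\Omega \partial_t e_u\,\varphi\dx + \int_\Omega \EM\nabla e_u\cdot\nabla\varphi\dx + \int_\Omega \bigl(f(u,w)-f(\uht,\wht)\bigr)\varphi\dx,\\
\langle R_2(t),\psi\rangle &= \int_\Omega \partial_t e_w\,\psi\dx + \int_\Omega \bigl(g(u,w)-g(\uht,\wht)\bigr)\psi\dx .
\end{align*}
Since $\uht$ is continuous and piecewise affine in time with values in $V_h^n\subset H^1(\Omega)$ (and likewise for $\wht$), we have $\uht\in X(t_a,t_b)$ and $\wht\in Y(t_a,t_b)$, hence $e_u\in X(t_a,t_b)$, $e_w\in Y(t_a,t_b)$. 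Throughout, I would use that $(u,w)$ is bounded by Proposition \ref{prop:prel} and that the discrete interpolants $(\uht,\wht)$ stay in a fixed bounded box, so that $f$ and $g$ are Lipschitz continuous in their arguments with a constant $L$ depending only on $f,g$ and those bounds.

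\emph{Upper bound \eqref{eq:A}.} This is the routine direction: estimate each term of the two identities above by Cauchy--Schwarz, $\left|\int_\Omega\partial_t e_u\,\varphi\right|\le\norm{H^*}{\partial_t e_u}\norm{H^1(\Omega)}{\varphi}$, $\left|\int_\Omega\EM\nabla e_u\cdot\nabla\varphi\right|\le\mu_{max}\norm{L^2(\Omega)}{\nabla e_u}\norm{H^1(\Omega)}{\varphi}$, and $\left|\int_\Omega(f(u,w)-f(\uht,\wht))\varphi\right|\le L\bigl(\norm{L^2(\Omega)}{e_u}+\norm{L^2(\Omega)}{e_w}\bigr)\norm{L^2(\Omega)}{\varphi}$. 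Taking the supremum over $\norm{H^1(\Omega)}{\varphi}\le1$ bounds $\norm{H^*}{R_1(t)}$ pointwise by $\norm{H^*}{\partial_t e_u}+C\bigl(\norm{H^1(\Omega)}{e_u}+\norm{L^2(\Omega)}{e_w}\bigr)$, and analogously $\norm{L^2(\Omega)}{R_2(t)}\le\norm{L^2(\Omega)}{\partial_t e_w}+L\bigl(\norm{L^2(\Omega)}{e_u}+\norm{L^2(\Omega)}{e_w}\bigr)$. Squaring, integrating over $(t_a,t_b)$, and recalling the definitions of the $X,Y$ norms gives \eqref{eq:A}; in particular this proves the asserted square integrability of $\norm{H^*}{R_1(\cdot)}$ and $\norm{L^2(\Omega)}{R_2(\cdot)}$.

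\emph{Lower bound \eqref{eq:B}.} Here I would test the first identity with $\varphi=e_u$ and the second with $\psi=e_w$, use $\int_\Omega\partial_t e_u\,e_u=\half\frac{d}{dt}\norm{L^2(\Omega)}{e_u}^2$ (and similarly for $e_w$), and add, obtaining
\[
\half\frac{d}{dt}\Bigl(\norm{L^2(\Omega)}{e_u}^2+\norm{L^2(\Omega)}{e_w}^2\Bigr)+\int_\Omega\EM\nabla e_u\cdot\nabla e_u\dx=\langle R_1,e_u\rangle+\langle R_2,e_w\rangle-\mathcal{N},
\]
where $\mathcal{N}$ collects the two nonlinear contributions. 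The diffusion term is bounded below by $\mu_{min}\norm{L^2(\Omega)}{\nabla e_u}^2$; the nonlinear term satisfies $|\mathcal{N}|\lesssim\norm{L^2(\Omega)}{e_u}^2+\norm{L^2(\Omega)}{e_w}^2$ by the local Lipschitz bound (if one wishes, the monotonicity of the leading cubic in $f$ and $g$ can be used to drop its contribution, but this is not necessary). For the residual terms I would write $\langle R_1,e_u\rangle\le\norm{H^*}{R_1}\bigl(\norm{L^2(\Omega)}{e_u}+\norm{L^2(\Omega)}{\nabla e_u}\bigr)$ and use Young's inequality to absorb $\tfrac{\mu_{min}}{2}\norm{L^2(\Omega)}{\nabla e_u}^2$ into the left-hand side, leaving $C\norm{H^*}{R_1}^2+\norm{L^2(\Omega)}{e_u}^2$, and $\langle R_2,e_w\rangle\le\half\norm{L^2(\Omega)}{R_2}^2+\half\norm{L^2(\Omega)}{e_w}^2$. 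Integrating from $0$ to $t$, using $e_u(0)=u_0-\Pi_h^0 u_0$ and $e_w(0)=w_0-\Pi_h^0 w_0$, and applying Grönwall's lemma controls $\norm{L^\infty(0,t;L^2(\Omega))}{e_u}^2+\norm{L^\infty(0,t;L^2(\Omega))}{e_w}^2+\norm{L^2((0,t)\times\Omega)}{\nabla e_u}^2$ by the right-hand side of \eqref{eq:B}; combined with $\norm{L^2(0,t;L^2(\Omega))}{e_u}^2\le t\,\norm{L^\infty(0,t;L^2(\Omega))}{e_u}^2$ this also gives the bound on $\norm{L^2(0,t;H^1(\Omega))}{e_u}$.

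\emph{Time-derivative norms.} Finally, solving the two identities for $\partial_t e_u$ and $\partial_t e_w$ yields $\norm{H^*}{\partial_t e_u}\le\norm{H^*}{R_1}+\mu_{max}\norm{L^2(\Omega)}{\nabla e_u}+L\bigl(\norm{L^2(\Omega)}{e_u}+\norm{L^2(\Omega)}{e_w}\bigr)$ and $\norm{L^2(\Omega)}{\partial_t e_w}\le\norm{L^2(\Omega)}{R_2}+L\bigl(\norm{L^2(\Omega)}{e_u}+\norm{L^2(\Omega)}{e_w}\bigr)$; squaring, integrating over $(0,t)$, and inserting the estimates already obtained closes the argument and delivers \eqref{eq:B}. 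The main obstacle, and the only step requiring genuine care beyond this Verfürth-type energy computation, is the non-global Lipschitz character of $f$ and $g$: every estimate above relies on a uniform $L^\infty$ bound for the discrete interpolants $(\uht,\wht)$, so that the constant $L$ is legitimate. Establishing (or assuming) such a bound — in the spirit of the set $S_{\delta_0}$ appearing in Theorem \ref{th:apriori1} — is the delicate ingredient; once it is in place, the rest is bookkeeping with Cauchy--Schwarz, Young and Grönwall.
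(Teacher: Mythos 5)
Your proposal is correct and follows essentially the same route as the paper: both directions rest on rewriting $R_1,R_2$ as the weak error equation, then Cauchy--Schwarz plus local Lipschitz bounds for \eqref{eq:A}, and testing with $(e_u,e_w)$, Young, Gr\"onwall, and finally solving for the time derivatives for \eqref{eq:B}. The only cosmetic difference is that the paper controls the nonlinear coupling via the mean value theorem and a quadratic form $\mathcal{Q}$ bounded through the \emph{a priori} $L^\infty$ bounds of Proposition \ref{prop:prel} and Theorem \ref{th:apriori1}, which is exactly the uniform-boundedness ingredient you correctly single out as the delicate point.
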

\begin{proof}
By employing equation \eqref{eq11:monodomain} together with the expressions of $R_1(t)$ and $R_2(t)$ we have, $\forall \varphi \in H^1(\Omega)$, $\forall \psi \in L^2(\Omega)$, $a.e.$ $t \in (0,T)$
\begin{equation}
\begin{aligned}
&	\int_\Omega \partial_t (u - \uht)\varphi \dx + \int_\Omega \EM\nabla (u-\uht)\cdot\nabla\varphi \dx + \int_\Omega (f(u,w)-f(\uht,\wht))\varphi \dx \\
&	+ \int_\Omega \partial_t(w-\wht)\psi \dx + \int_\Omega (g(u,w) - g(\uht,\wht))\psi \dx = \langle R_1(t), \varphi \rangle + \langle R_2(t), \psi \rangle.
\end{aligned}
\label{eq:1}
\end{equation}
Fixing $\psi = 0$ and employing the Cauchy-Schwarz inequality and 
the fact that $f$ is Lipschitz continuous with constant $K_f$, 
\[
\begin{aligned}
|\langle R_1(t), \varphi \rangle| \leq & \left( \norm{H^*}{\partial_t(u-\uht)} + \mu_{max}\norm{H^1(\Omega)}{u-\uht} + K_f \norm{L^2(\Omega)}{u-\uht} \right. \\
& \quad \left. + K_f \norm{L^2(\Omega)}{w-\wht}\right) \norm{H^1(\Omega)}{\varphi}.
\end{aligned}
\]
Thus, computing the $L^2$ norm on $(t_a,t_b)$ we obtain
\begin{equation}
\begin{aligned}
\norm{L^2(t_a,t_b; H^*)}{R_1} \leq & \left( \norm{L^2(t_a,t_b;H^*)}{\partial_t(u-\uht)} + \mu_{max} \norm{L^2(t_a,t_b;H^1)}{u-\uht} \right. \\&+ \left. K_f \norm{L^2((t_a,t_b)\times \Omega)}{u-\uht} + K_f \norm{L^2((t_a,t_b)\times \Omega)}{w-\wht}\right). 
\end{aligned}
\label{eq:R1}
\end{equation}
Analogously, when taking $\varphi = 0$, we get
\begin{equation}
\begin{aligned}
\norm{L^2((t_a,t_b)\times \Omega)}{R_2} \leq & \left( \norm{L^2((t_a,t_b)\times \Omega)}{\partial_t(w-\wht)} + K_g \norm{L^2((t_a,t_b)\times \Omega)}{u-\uht} \right. \\& + \left. K_g \norm{L^2((t_a,t_b)\times \Omega)}{w-\wht}\right), 
\end{aligned}
\label{eq:R2}
\end{equation}
being $K_g$ the Lipschitz constant of $g$. Summing \eqref{eq:R1} and \eqref{eq:R2} we obtain \eqref{eq:A}.
\par
To prove \eqref{eq:B}, consider \eqref{eq:1} and take $\varphi = u - \uht$, $\psi = w - \wht$; by mean value theorem \footnote[2]{ Applied to the real valued function $\textit{h}: \zeta \in \R \rightarrow h(\zeta) = \int_{\Omega}f(u+\zeta(\uht-u),w+\zeta(\wht-w))(\uht-u)\dx$, it guarantees that there exists $\zeta^*\in [0,1]$ $s.t.$ $\textit{h}(1)-\textit{h}(0)= h'(\zeta*)$.} it holds that $\exists (\xi_1,\eta_1) = (u,w) + \zeta_1(\uht-u,\wht-1), (\xi_2,\eta_2) = (u,w) + \zeta_2(\uht-u,\wht-1)$,  such that
\[
\begin{aligned}
\half \frac{d}{dt} &\left( \norm{L^2(\Omega)}{u-\uht}^2 + \norm{L^2(\Omega)}{w-\wht}^2 \right) + \int_\Omega \EM\nabla(u-\uht) \cdot \nabla(u-\uht) \dx \\
&+\int_\Omega (f_u(\xi_1,\eta_1)(u-\uht) + f_w(\xi_1,\eta_1)(w-\wht))(u-\uht)\dx \\
&+\int_\Omega (g_u(\xi_2,\eta_2)(u-\uht) + g_w(\xi_2,\eta_2)(w-\wht))(w-\wht)\dx\\
&= \langle R_1, u-\uht \rangle + \langle R_2, w-\wht \rangle.
\end{aligned} 
\]
Consider now the quadratic form $\mathcal{Q}:H^1(\Omega) \times L^2(\Omega) \rightarrow \R$,
\[
\mathcal{Q}(m,n) = \int_\Omega \left( -f_u(\xi_1,\eta_1)m^2 - \left( f_w(\xi_1,\eta_1) + g_u(\xi_2,\eta_2)\right)mn - g_w(\xi_2,\eta_2)n^2 \right) \dx,
\]
which allows to rewrite the previous equation as
\[
\begin{aligned}
\half \frac{d}{dt} &\left( \norm{L^2(\Omega)}{u-\uht}^2 + \norm{L^2(\Omega)}{w-\wht}^2 \right) + \int_\Omega \EM\nabla(u-\uht) \cdot \nabla(u-\uht) \dx \\
& = \mathcal{Q}(u-\uht,w-\wht) + \langle R_1, u-\uht \rangle + \langle R_2, w-\wht \rangle.
\end{aligned}
\]
It clearly holds that $|\mathcal{Q}(m,n)|\leq \lambda_{max} (\norm{L^2(\Omega)}{m} + \norm{L^2(\Omega)}{n})$, being $\lambda_{max}$ a continuous function of $f_u(\xi_1,\eta_1), f_w(\xi_1,\eta_1), g_u(\xi_2,\eta_2),g_w(\xi_2,\eta_2)$. Hence, $\lambda_{max}$ depends both on $x$ and $t$, but thanks to \textit{a priori} bounds on $(\xi_1,\eta_1)$ and $(\xi_2,\eta_2)$ (inherited from Proposition \ref{prop:prel} and Theorem \ref{th:apriori1}), we can ensure it is bounded from above on $\Omega \times (0,T)$ by a positive constant $\Lambda$. Via Cauchy-Schwarz and Young inequalities,
\[
\begin{aligned}
	&\half \frac{d}{dt} \left( \norm{L^2(\Omega)}{u-\uht}^2 + \norm{L^2(\Omega)}{w-\wht}^2 \right) + \mu_{min} \norm{H^1(\Omega)}{u-\uht}^2 \leq \mu_{min} \norm{L^2(\Omega)}{u-\uht}^2 \\ 
	&\quad + \Lambda \left( \norm{L^2(\Omega)}{u-\uht}^2 + \norm{L^2(\Omega)}{w-\wht}^2 \right) + \frac{1}{2\mu_{min}}\left( \norm{H^*}{R_1}^2 + \norm{L^2(\Omega)}{R_2}^2 \right) \\
	&\quad + \frac{\mu_{min}}{2}\left( \norm{H^1(\Omega)}{u-\uht}^2 + \norm{L^2(\Omega)}{w-\wht}^2 \right),
\end{aligned}
\]
hence
\[
\begin{aligned}
	&\half \frac{d}{dt} \left( \norm{L^2(\Omega)}{u-\uht}^2 + \norm{L^2(\Omega)}{w-\wht}^2 \right) + \frac{\mu_{min}}{2} \norm{H^1(\Omega)}{u-\uht}^2 \\ 
	&\quad \leq (\Lambda + \mu_{min}) \left( \norm{L^2(\Omega)}{u-\uht}^2 + \norm{L^2(\Omega)}{w-\wht}^2 \right) + \frac{1}{2\mu_{min}}\left( \norm{H^*}{R_1}^2 + \norm{L^2(\Omega)}{R_2}^2 \right).
\end{aligned}
\]
Let us now take a fixed $t \in (0,T)$ and integrate from $0$ to $t$, obtaining
\begin{equation}
\begin{aligned}
&\left( \norm{L^2(\Omega)}{(u-\uht)(t)}^2 + \norm{L^2(\Omega)}{(w-\wht)(t)}^2 \right) +  \mu_{min}\norm{L^2(0,t;H^1)}{u-\uht}^2 \leq \\
&\quad \int_0^t 2(\Lambda + \mu_{min}) \left( \norm{L^2(\Omega)}{(u-\uht)(s)}^2 + \norm{L^2(\Omega)}{(w-\wht)(s)}^2 \right)ds \\
&\quad + \int_0^t \frac{1}{\mu_{min}}\left( \norm{H^*}{R_1(s)}^2 + \norm{L^2(\Omega)}{R_2(s)}^2 \right)ds + \left( \norm{L^2(\Omega)}{u_0-\uht(0)}^2 + \norm{L^2(\Omega)}{w_0-\wht(0)}^2 \right).
\end{aligned}
\label{eq:belowt}
\end{equation}
Via Gronwall's inequality, we obtain
\[
\begin{aligned}
\left( \norm{L^2(\Omega)}{u(t)-\uht(t)}^2 + \norm{L^2(\Omega)}{w(t)-\wht(t)}^2 \right) \leq &e^{2(\Lambda + \mu_{min})t} \Bigl( \norm{L^2(\Omega)}{u_0 - \Pi_h^0 u_0}^2 + \norm{L^2(\Omega)}{w_0 - \Pi_h^0 w_0}^2  \\
&  + \frac{1}{\mu_{min}} \left( \norm{L^2(0,t;H^*)}{R_1}^2 + \norm{L^2((0,t)\times\Omega)}{R_2}^2 \right) \Bigr),
\end{aligned}
\]
whence the bound on $\norm{L^\infty(0,t,L^2(\Omega))}{u-\uht}$ and $\norm{L^\infty(0,t,L^2(\Omega))}{w-\wht}$. Moreover, from \eqref{eq:belowt} we get
\[
\begin{aligned}
& \mu_{min} \norm{L^2(0,t;H^1)}{u-\uht}^2 \leq 2(\Lambda + \mu_{min})t \left(\norm{L^\infty(0,t,L^2(\Omega))}{u-\uht}^2+\norm{L^\infty(0,t,L^2(\Omega))}{w-\wht}^2 \right) \\
&\quad + \frac{1}{\mu_{min}} \left( \norm{L^2(0,t;H^*)}{R_1}^2 + \norm{L^2((0,t)\times\Omega)}{R_2}^2 \right) + \left( \norm{L^2(\Omega)}{u_0-\uht(0)}^2 + \norm{L^2(\Omega)}{w_0-\wht(0)}^2 \right) .
\end{aligned}
\]
Finally, taking $\psi = 0$ in \eqref{eq:1}, by Cauchy-Schwarz inequality we get
\[
\begin{aligned}
\norm{H^*}{\partial_t (u - \uht)(t)} \leq &\mu_{max} \norm{H^1(\Omega)}{(u-\uht(t))} + K_f \norm{L^2(\Omega)}{(u-\uht)(t)} \\
&+ K_f \norm{L^2(\Omega)}{(w-\wht)(t)} + \norm{H^*}{R_1},
\end{aligned}
\]
thus
\[
\begin{aligned}
\norm{L^2((0,t) \times \Omega)}{\partial_t (u - \uht)} \leq & \mu_{max} \norm{L^2(0,t;H^1(\Omega))}{u-\uht} + K_f \sqrt{t} \norm{L^\infty(0,t;L^2(\Omega))}{u-\uht} \\&+ K_f \sqrt{t} \norm{L^\infty(0,t;L^2(\Omega))}{w-\wht} + \norm{L^2(0,t;H^*)}{R_1}.
\end{aligned}
\]
A similar strategy allows to conclude that an analogous bound holds for $\partial_t (w-\wht)$, hence every part of the norms $\norm{X(0,t)}{u-\uht}$, $\norm{Y(0,t)}{w-\wht}$ is bounded as in the thesis. 
\end{proof}

According to the strategy proposed in \cite{art:amreinwihler}, it is now possible to perform a decomposition of the residual operators, by distinguishing the contribution from space discretization, time discretization and linearization as follows
:
\begin{subequations}
\begin{equation}
\begin{aligned}
&\langle R_1^h(t), \varphi \rangle = -\int_\Omega \frac{\uhkn - \uhnm}{\tau_n} \varphi \dx- \int_\Omega \EM\nabla \uhkn \cdot \nabla \varphi \dx\\
& \quad - \int_\Omega  \left[f(\uhkmn,\whkmn) + f_u(\uhkmn,\whkmn)(\uhkn - \uhkmn) \right. \\
& \quad \left. + f_w(\uhkmn,\whkmn)(\whkn - \whkmn) \right]\varphi\dx;
\end{aligned}
\label{eq:Rh1}
\end{equation}
\begin{equation}
\begin{aligned}
\langle R_1^\tau(t), \varphi \rangle &= - \int_\Omega \EM\nabla(\uht - \uhkn) \cdot \nabla \varphi \dx- \int_\Omega  \left[f(\uht,\wht) \right. \\
& \quad \left. - f(\uhkn,\whkn) \right]\varphi \dx;
\end{aligned}
\label{eq:Rt1}
\end{equation}
\begin{equation}
\begin{aligned}
\langle R_1^k(t), \varphi \rangle &= - \int_\Omega  \left[f(\uhkn,\whkn) - f_u(\uhkmn,\whkmn)(\uhkn - \uhkmn) \right. \\
	& \quad \left. - f_w(\uhkmn,\whkmn)(\whkn - \whkmn) - f(\uhkmn,\whkmn)\right]\varphi \dx;
\end{aligned}
\label{eq:Rk1}
\end{equation}
\end{subequations}
\begin{subequations}
\begin{equation}
\begin{aligned}
\langle R_2^h(t), \psi \rangle &= -\int_\Omega \frac{\whkn - \whnm}{\tau_n} \psi \dx - \int_\Omega \left[g(\uhkmn,\whkmn) \right. \\
& \quad + g_u(\uhkmn,\whkmn)(\uhkn - \uhkmn) \\
& \quad \left. + g_w(\uhkmn,\whkmn)(\whkn - \whkmn) \right]\psi \dx;
\end{aligned}
\label{eq:Rh2}
\end{equation}
\begin{equation}
\langle R_2^\tau(t), \psi \rangle = - \int_\Omega \left[g(\uht,\wht) - g(\uhkn,\whkn) \right]\psi \dx;
\label{eq:Rt2}
\end{equation}
\begin{equation}
\begin{aligned}
\langle R_2^k(t), \psi \rangle &= - \int_\Omega \left[g(\uhkn,\whkn) - g_u(\uhkmn,\whkmn)(\uhkn - \uhkmn) \right. \\
	& \quad \left. - g_w(\uhkmn,\whkmn)(\whkn - \whkmn) - g(\uhkmn,\whkmn)\right]\psi \dx.
\label{eq:Rk2}
\end{aligned}
\end{equation}
\end{subequations}
It is immediate to verify that $R_1(t) = R_1^h(t) + R_1^\tau(t) + R_1^k(t)$ in $H^*$ and $R_2(t)= R_2^h(t) + R_2^\tau(t) + R_2^k(t)$ in $L^2(\Omega)$; moreover, in view of the discrete problem \eqref{eq:Newt1}-\eqref{eq:Newt2}, the following orthogonality property holds:
\begin{equation}
\begin{aligned}
\langle R_1^h(t),\varphi_h \rangle  &= 0 \qquad &\forall \varphi_h \in V_h \\
\langle R_2^h(t),\psi_h \rangle &= 0 \qquad &\forall \psi_h \in V_h.
\end{aligned}
\label{eq:orthogonality}
\end{equation}

\section{\textit{A posteriori} estimators}
\label{estimators}
\textcolor{corr2}{We denote by $\widetilde{\mathcal{E}}_h^n$ the set of all faces of $\mathcal{T}_h^n$ and distinguish between the set of boundary faces $\widetilde{\mathcal{E}}_{h,\partial \Omega}^n$ and the set $\widetilde{\mathcal{E}}_{h,int}^n$ of the interior ones. Each face $E \in \widetilde{\mathcal{E}}_{h,int}^n$ is shared by two distinct elements, which we denote as $K_{E,1}$ and $K_{E,2}$; we define the jump of the conormal derivative across $E$ as
}
\[
\left[  k_E \normal_E \cdot \nabla u_h \right]_E = \left.\left(k_{K_{E,1}} \normal_{E,1} \cdot \nabla u_h|_{K_{E,1}} - k_{K_{E,2}} \normal_{E,2} \cdot \nabla u_h|_{K_{E,2}}\right)\right|_E,
\]
where $\normal_{E,1}$ and $\normal_{E,2}$ are outer the normals of $E$ with respect to $K_{E,1}$ and $K_{E,2}$, hence $\normal_{E,1} = -\normal_{E,2}$. For each face $E$ of $\widetilde{\mathcal{E}}_{h,\partial \Omega}^n$ (which belongs to a single element $K$ of the tessellation), we set
\[
\left[  k_E \normal_E \cdot \nabla u_h \right]_E = \left.\left(k_{K_{E}} \normal_E \cdot \nabla u_h|_{K_{E}}\right)\right|_E.
\]
We now introduce the following computable quantities which will appear in the \textit{a posteriori} estimates:
\par
\noindent \underline{\textit{Space indicators}}
\[
\begin{aligned}
\eta_k^n &= \left( \sum_{K \in \tilde{\Tau}_h^n} h_K^2 \norm{L^2(K)}{R_{K,1}}^2 + \sum_{E \in \tilde{\mathcal{E}}_h^n} h_E \norm{L^2(E)}{R_E}^2 + \norm{L^2(\Omega)}{\textcolor{corr2}{R_{\Omega,2}}}^2 \right)^\half \\
R_{K,1} &= \Bigl( -\frac{\uhkn - \uhnm}{\tau_n} + \nabla \cdot(M \nabla \uhkn) -  \left[ f(\uhkmn,\whkmn) \right. \\
& \quad \left. - f_u(\uhkmn,\whkmn)(\uhkn - \uhkmn) - f_w(\uhkmn,\whkmn)(\whkn - \whkmn) \right] \Bigr) \Bigr|_K \\
R_E &= \left[ k_E \normal_E \cdot \nabla u_h^n  \right]_E \\
R_{\Omega,2} &= -\frac{\whkn - \whnm}{\tau_n} - \left[g(\uhkmn,\whkmn) - g_u(\uhkmn,\whkmn)(\uhkn - \uhkmn) \right.  \\
& \quad - \left. g_w(\uhkmn,\whkmn)(\whkn - \whkmn)\right] .
\end{aligned}
\]
\underline{\textit{Time indicators}}
\[
\begin{aligned}
\vartheta_{k}^n &= \left(  \frac{1}{3} \norm{L^2(\Omega)}{M^{1/2} \nabla(\uhkn-\uhnm)}^2 + \frac{1}{\tau_n} \norm{L^2((t_{n-1},t_n)\times \Omega)}{P_{1}(t)}^2 + \frac{1}{\tau_n} \norm{L^2((t_{n-1},t_n)\times \Omega)}{P_{2}(t)}^2  \right)^\half \\
P_{1}(t) &= - \Bigl( f(\uht,\wht)-f(\uhkn,\whkn) \Bigr) \\
P_{2}(t) &= - \Bigl( g(\uht,\wht)- g(\uhkn,\whkn) \Bigr).
\end{aligned}
\]
\underline{\textit{Linearization indicators}}
\[
\begin{aligned}
\gamma_{k}^n &= \left(  \norm{L^2(\Omega)}{Q_{1}}^2 + \norm{L^2(\Omega)}{Q_{2}}^2 \right)^\half \\
Q_{1} &= -  \Bigl( f(\uhkn,\whkn) - f_u(\uhkmn,\whkmn)(\uhkn - \uhkmn)  \\
	& \quad - f_w(\uhkmn,\whkmn)(\whkn - \whkmn) - f(\uhkmn,\whkmn) \Bigr). \\
Q_{2} &= -\Bigl( g(\uhkn,\whkn) - g_u(\uhkmn,\whkmn)(\uhkn - \uhkmn)  \\
	& \quad - g_w(\uhkmn,\whkmn)(\whkn - \whkmn) - g(\uhkmn,\whkmn) \Bigr).
\end{aligned}
\]
The first main result of this section is the following \textit{a posteriori} upper bound:
\begin{theorem}
For each discrete solution $(\{u_{h,k}^n\},\{w_{h,k}^n\}$ with $n = 1,\ldots,N$, $k=1,\ldots,K_n$, collecting all $K_n$ in the multi-index $\mathbf{k} = [K_n]_{n=1}^N$ and definining $\uht,\wht$ as in \eqref{eq:interpolants}, it holds that for each $n = 1,\ldots,N$:
\begin{equation}
\begin{aligned}
 \left\{\norm{X(0,t_n)}{u-u_{h,\tau}^{\mathbf{k}}}^2 + \norm{Y(0,t_n)}{w-\wht}^2\right\}^\half &\lesssim \Bigl\{ \norm{L^2(\Omega)}{u_0 - \Pi_V^0 u_0}^2 + \norm{L^2(\Omega)}{w_0 - \Pi_h^0 w_0}^2 \\
& \quad + \sum_{m = 1}^n \tau_n((\eta_{k}^m)^2 + (\vartheta_{k,U}^m)^2 + (\gamma_{k,U}^m)^2 ) \Bigr\}^\half,
\end{aligned}
\label{eq:above}
\end{equation}
where the symbol $\lesssim$ denotes that an inequality holds up to a positive multiplicative constant independent of the space discretization parameter $h$.
\label{th:apost}
\end{theorem}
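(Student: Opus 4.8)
The plan is to combine the residual–error lower bound \eqref{eq:B} of Theorem \ref{th11:1} with the splitting $R_1 = R_1^h + R_1^\tau + R_1^k$ in $H^*$ and $R_2 = R_2^h + R_2^\tau + R_2^k$ in $L^2(\Omega)$, and to bound each of the six pieces by the corresponding indicator. Evaluating \eqref{eq:B} at $t=t_n$ and dropping the constant $c_*$ into $\lesssim$, the claim \eqref{eq:above} reduces to showing
\[
\norm{L^2(0,t_n;H^*)}{R_1}^2 + \norm{L^2((0,t_n)\times\Omega)}{R_2}^2 \lesssim \sum_{m=1}^n \tau_m\left((\eta_k^m)^2 + (\vartheta_k^m)^2 + (\gamma_k^m)^2\right),
\]
with the convention $u_0 - \uht(0) = u_0 - \Pi_h^0 u_0$ (and likewise for $w_0$), which is exactly the initial-data term. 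Since $\norm{H^*}{R_1(t)}^2 \le 3(\norm{H^*}{R_1^h(t)}^2 + \norm{H^*}{R_1^\tau(t)}^2 + \norm{H^*}{R_1^k(t)}^2)$ and similarly for $R_2$, it suffices to estimate each piece pointwise in $t$, integrate over $(t_{m-1},t_m)$, and sum in $m$; every indicator $\eta_k^m,\vartheta_k^m,\gamma_k^m$ is constant in $t$ on $(t_{m-1},t_m]$ except for the time-integrated $P_i$ contributions, so most integrations are trivial.

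The space part is the core estimate and the main obstacle. Fix $t\in(t_{m-1},t_m]$. Using the Galerkin orthogonality \eqref{eq:orthogonality}, for every $\varphi\in H^1(\Omega)$ one writes $\langle R_1^h(t),\varphi\rangle = \langle R_1^h(t),\varphi - I_h^m\varphi\rangle$, where $I_h^m:H^1(\Omega)\to V_h^m$ is a Clément-type quasi-interpolation operator; integrating by parts element by element over $\widetilde{\Tau}_h^m$ in the definition \eqref{eq:Rh1} of $R_1^h$ exposes the interior residual $R_{K,1}$ (here one uses that $\uhkn$ is piecewise affine, so $\nabla\cdot(\EM\nabla\uhkn)$ is the element contribution) together with the face residual $R_E = [k_E\normal_E\cdot\nabla u_h^n]_E$ arising from inter-element jumps of the conormal derivative, and on $\partial\Omega$ from the homogeneous Neumann datum. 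Cauchy–Schwarz, the local approximation estimates $\norm{L^2(K)}{\varphi - I_h^m\varphi}\lesssim h_K\norm{H^1(\omega_K)}{\varphi}$ and $\norm{L^2(E)}{\varphi - I_h^m\varphi}\lesssim h_E^{1/2}\norm{H^1(\omega_E)}{\varphi}$, and the bounded overlap of the patches $\omega_K,\omega_E$ — all with $h$-independent constants thanks to shape-regularity and assumptions (i)--(ii) — then give $\norm{H^*}{R_1^h(t)}\lesssim\big(\sum_K h_K^2\norm{L^2(K)}{R_{K,1}}^2 + \sum_E h_E\norm{L^2(E)}{R_E}^2\big)^{1/2}$. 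For the $w$-component there is no spatial derivative, so $R_2^h(t)$ is the $L^2(\Omega)$ functional represented by $R_{\Omega,2}$ and $\norm{L^2(\Omega)}{R_2^h(t)} = \norm{L^2(\Omega)}{R_{\Omega,2}}$. Hence $\norm{H^*}{R_1^h(t)}^2 + \norm{L^2(\Omega)}{R_2^h(t)}^2 \lesssim (\eta_k^m)^2$, and integrating over $(t_{m-1},t_m)$ yields the $\tau_m(\eta_k^m)^2$ contribution.

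The time and linearization parts are routine. For $t\in(t_{m-1},t_m]$, Cauchy–Schwarz together with $\int_\Omega \EM\nabla v\cdot\nabla\varphi \le \mu_{max}^{1/2}\norm{L^2(\Omega)}{\EM^{1/2}\nabla v}\,\norm{H^1(\Omega)}{\varphi}$ applied to \eqref{eq:Rt1}--\eqref{eq:Rt2} give $\norm{H^*}{R_1^\tau(t)}\lesssim \norm{L^2(\Omega)}{\EM^{1/2}\nabla(\uht-\uhkn)} + \norm{L^2(\Omega)}{P_1(t)}$ and $\norm{L^2(\Omega)}{R_2^\tau(t)} = \norm{L^2(\Omega)}{P_2(t)}$; since $\uht - \uhkn = \tfrac{t_m - t}{\tau_m}(\uhnm - \uhkn)$ on $(t_{m-1},t_m)$, $\int_{t_{m-1}}^{t_m}\norm{L^2(\Omega)}{\EM^{1/2}\nabla(\uht-\uhkn)}^2\,dt = \tfrac{\tau_m}{3}\norm{L^2(\Omega)}{\EM^{1/2}\nabla(\uhkn-\uhnm)}^2$ while the $P_i$ terms integrate to $\norm{L^2((t_{m-1},t_m)\times\Omega)}{P_i}^2$, so $\int_{t_{m-1}}^{t_m}(\norm{H^*}{R_1^\tau}^2 + \norm{L^2(\Omega)}{R_2^\tau}^2)\,dt \lesssim \tau_m(\vartheta_k^m)^2$. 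Finally, by \eqref{eq:Rk1}--\eqref{eq:Rk2} the functionals $R_1^k(t)$ and $R_2^k(t)$ are represented by the $t$-independent $L^2(\Omega)$ functions $Q_1$ and $Q_2$, whence $\norm{H^*}{R_1^k(t)}\le\norm{L^2(\Omega)}{Q_1}$, $\norm{L^2(\Omega)}{R_2^k(t)} = \norm{L^2(\Omega)}{Q_2}$, and $\int_{t_{m-1}}^{t_m}(\norm{H^*}{R_1^k}^2 + \norm{L^2(\Omega)}{R_2^k}^2)\,dt \le \tau_m(\gamma_k^m)^2$. Summing the three contributions over $m=1,\dots,n$ and inserting into \eqref{eq:B} proves \eqref{eq:above}. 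The only delicate point is the space estimate of the second paragraph: one must set up the quasi-interpolant compatibly with the family of refined meshes $\widetilde{\Tau}_h^m$, verify that the interpolation, trace and finite-overlap constants are genuinely independent of $h$ under assumptions (i)--(ii), and treat the spatially varying (merely smooth) coefficient $\EM$ correctly in the element-wise integration by parts.
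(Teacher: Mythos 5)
Your proposal is correct and follows essentially the same route as the paper: reduce via the lower bound \eqref{eq:B} of Theorem \ref{th11:1}, split the residuals as $R_i = R_i^h + R_i^\tau + R_i^k$, and bound each piece by the corresponding indicator, with the time integration of $\bigl(\tfrac{t_n-t}{\tau_n}\bigr)^2$ producing the factor $\tau_n/3$. The only presentational difference is that you re-derive the spatial bound (Cl\'ement interpolation, element-wise integration by parts, Galerkin orthogonality) inline, whereas the paper isolates exactly this step as the upper estimate of Lemma \ref{lemma:hest} and simply invokes it.
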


In order to prove Theorem \ref{th:apost}, we need a preliminary results dealing with the spatial residual operators only. 
\begin{lemma}
There exist two positive constants $c_\dag$, $c^{\dag}$ independent of $n$ s.t., for almost every $t \in (t_{n-1},t_n)$ and for each $n=1,\ldots,N$, it holds:
\begin{equation}
\frac{1}{c_\dag} \eta_k^n \leq \left(\norm{H^*}{R_1^h(t)}^2 + \norm{L^2(\Omega)}{R_2^h(t)}^2\right)^\half \leq c^\dag \eta_k^n.
\label{eq:hest}
\end{equation}
\label{lemma:hest}
\end{lemma}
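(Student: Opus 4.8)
The plan is to prove the two inequalities in \eqref{eq:hest} separately, following the classical scheme of Verfürth. Throughout we fix $t\in(t_{n-1},t_n)$, so that the discrete quantities involved are exactly those entering the definitions \eqref{eq:Rh1}--\eqref{eq:Rh2} on the mesh $\widetilde{\mathcal{T}}_h^n$. All mesh‑dependent constants will be traced to the shape‑regularity of $\widetilde{\mathcal{T}}_h^n$ — which is uniform in $n$ thanks to conditions (i)--(ii), the uniform ratio bounds $\rho_*\le\rho(K',K)\le\rho^*$, and the shape‑regularity of $\{\mathcal{T}_h^n\}$ — and to $\mu_{min},\mu_{max}$; this is what makes $c_\dag,c^\dag$ independent of $n$. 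We also record at once that $R_2^h(t)$ is, by definition, represented in $L^2(\Omega)$ by the function $R_{\Omega,2}$, so that $\norm{L^2(\Omega)}{R_2^h(t)}=\norm{L^2(\Omega)}{R_{\Omega,2}}$ exactly; hence both directions reduce to comparing $\norm{H^*}{R_1^h(t)}$ with the first two contributions to $\eta_k^n$.

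\emph{Upper bound.} Let $\varphi\in H^1(\Omega)$ and let $I_h\varphi\in V_h^n$ be a Clément/Scott--Zhang quasi‑interpolant on $\widetilde{\mathcal{T}}_h^n$, with the local estimates $\norm{L^2(K)}{\varphi-I_h\varphi}\lesssim h_K\norm{H^1(\omega_K)}{\varphi}$ and $\norm{L^2(E)}{\varphi-I_h\varphi}\lesssim h_E^{1/2}\norm{H^1(\omega_E)}{\varphi}$ on patches $\omega_K,\omega_E$ of finite overlap. By the orthogonality property \eqref{eq:orthogonality} we may replace $\varphi$ by $\varphi-I_h\varphi$ in $\langle R_1^h(t),\varphi\rangle$; integrating the diffusion term by parts element by element on $\widetilde{\mathcal{T}}_h^n$ produces precisely the element residuals $R_{K,1}$ (well defined on each $K$, since $\nabla\cdot(M\nabla\uhkn)$ makes sense there) and the interelement conormal jumps $R_E$, with the Neumann datum encoded on the boundary faces. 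Cauchy--Schwarz on each $K$ and $E$, the interpolation estimates, and finite overlap then give $|\langle R_1^h(t),\varphi\rangle|\lesssim\bigl(\sum_K h_K^2\norm{L^2(K)}{R_{K,1}}^2+\sum_E h_E\norm{L^2(E)}{R_E}^2\bigr)^{1/2}\norm{H^1(\Omega)}{\varphi}$; taking the supremum over $\varphi$ and adding $\norm{L^2(\Omega)}{R_{\Omega,2}}^2$ yields the right inequality of \eqref{eq:hest}.

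\emph{Lower bound.} This is the efficiency direction, obtained with interior and face bubble functions. For $K\in\widetilde{\mathcal{T}}_h^n$, test $R_1^h(t)$ against $v_K=b_K\overline{R}_{K,1}\in H^1_0(K)\subset H^1(\Omega)$, where $\overline{R}_{K,1}$ is a polynomial approximation of $R_{K,1}$; note $v_K\notin V_h^n$, so no orthogonality is lost, and since $v_K$ vanishes on $\partial K$ one has $\langle R_1^h(t),v_K\rangle=\int_K R_{K,1}v_K$. The standard bubble estimates (norm equivalence on polynomials, inverse inequality $\norm{H^1(K)}{v_K}\lesssim h_K^{-1}\norm{L^2(K)}{v_K}$) give $h_K\norm{L^2(K)}{\overline{R}_{K,1}}\lesssim\norm{H^*}{R_1^h(t)}+h_K\norm{L^2(K)}{R_{K,1}-\overline{R}_{K,1}}$, and a triangle inequality transfers this to $R_{K,1}$ itself. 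For an interior face $E$ with $\omega_E=K_{E,1}\cup K_{E,2}$, test against $v_E=b_E R_E$ (with $R_E$ extended suitably to $\omega_E$); integration by parts on $K_{E,1},K_{E,2}$ leaves only the contribution of $R_E$ on $E$ plus the two element residuals, so $h_E^{1/2}\norm{L^2(E)}{R_E}\lesssim\norm{H^*}{R_1^h(t)}+\sum_{K\subset\omega_E}h_K\norm{L^2(K)}{R_{K,1}}$, the last sum being already controlled; boundary faces are handled identically with a single adjacent element. Squaring, summing over $K$ and $E$, using finite overlap and $h_E\sim h_K$, and adding the identity $\norm{L^2(\Omega)}{R_{\Omega,2}}=\norm{L^2(\Omega)}{R_2^h(t)}$, yields the left inequality of \eqref{eq:hest}.

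\emph{Main obstacle.} The bubble‑function bookkeeping is routine; the genuine points of care are the non‑polynomial contributions to $R_{K,1}$ and $R_E$ coming from the smooth coefficient $M$ (through $\nabla\cdot(M\nabla\uhkn)$ and the conormal jumps), which force the introduction of polynomial approximations and must be shown to generate only higher‑order data‑oscillation terms — this uses that $M$ is smooth and $\uhkn$ piecewise linear, together with an inverse estimate — and the verification that every constant stays independent of $n$ even though the mesh changes between $t_{n-1}$ and $t_n$, which is exactly what conditions (i)--(ii) and the ratio bounds $\rho_*,\rho^*$ guarantee.
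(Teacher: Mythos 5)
Your proposal is correct and follows essentially the same route as the paper: the identification $\norm{L^2(\Omega)}{R_2^h(t)}=\norm{L^2(\Omega)}{R_{\Omega,2}}$, elementwise integration by parts plus Cl\'ement interpolation and the orthogonality \eqref{eq:orthogonality} for the upper bound, and a bubble-function efficiency argument for the lower bound. The only (immaterial) difference is that the paper tests $R_1^h$ against a single global combination $W_n$ of all element and face bubbles rather than against local bubbles one at a time, a form it reuses later in the efficiency proof of Theorem \ref{th:efficiency}; your explicit handling of the non-polynomial part of $R_{K,1}$ via polynomial approximation and oscillation terms is a point the paper silently delegates to the reference.
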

\begin{proof}
\textcolor{corr2}{We follow the strategy outlined in \cite[Lemma 5.1]{art:verfurth} (see also \cite{art:amreinwihler}). In particular, since $R_1^k(t)$ and $R_2^k(t)$ are constant in time within each interval $(t_{n-1},t_n)$, estimates \eqref{eq:hest} can be proved by similar arguments as the ones employed for elliptic problems. We now consider $t \in (t_{n-1},t_n)$ and neglect the dependence of $R_1^h$, $R_2^h$ on $t$.}  Integrating by parts the expression of $R_1^h$
, we obtain that for each $\varphi \in H^1(\Omega)$
\[
\langle R_1^h, \varphi \rangle  =  \sum_{K \in \widetilde{\mathcal{T}}_h^n} \int_K R_{K,1} \varphi \dx + \sum_{E \in \widetilde{\mathcal{E}}_h^n} \int_E R_E \varphi \dx.
\]
We now introduce the Clément interpolation operator $I_h: H^1(\Omega) \rightarrow V_h^n$ (see \cite{art:clement}, \cite{book:brenner2007}); proceeding in a standard way (see, e.g., \cite{book:verfurthreview}) and employing the orthogonality properties in \eqref{eq:orthogonality} we have
\[
\begin{aligned}
|\langle R_1^h,\varphi \rangle| &= \left| \langle R_1^h,I_h \varphi \rangle + \langle R_1^h,\varphi - I_h \varphi \rangle \right| \leq \sum_{K \in \widetilde{\mathcal{T}}_h^n} \left|\int_K R_{K,1} (\varphi - I_h \varphi) \dx \right| + \sum_{E \in \widetilde{\mathcal{E}}_h^n} \left|\int_E R_E (\varphi - I_h \varphi) \dx\right|\\
& \leq c_1 \sum_{K \in \widetilde{\mathcal{T}}_h^n} h_K \norm{L^2(K)}{R_{K,1}} \norm{L^2(\widetilde{\omega_K})}{\nabla \varphi} + c_2 \sum_{E \in \widetilde{\mathcal{E}}_h^n} h_E^{\half}\norm{L^2(E)}{R_E} \norm{L^2(\widetilde{\omega_E})}{\nabla \varphi},
\end{aligned}
\]
where $\widetilde{\omega}_K$ (respectively, $\widetilde{\omega}_E$) is the union of all the elements of $\widetilde{\mathcal{T}}_h^n$ containing at least a vertex of $K$ (respectively, $E$). This entails that 
\[ 
\norm{H^*}{R_1^h} \leq \textcolor{corr2}{C^{\dag}} \left( \sum_{K \in \widetilde{\mathcal{T}}_h^n} h_K \norm{L^2(K)}{R_{K,1}} + \sum_{E \in \widetilde{\mathcal{E}}_h^n} h_E^{\half}\norm{L^2(E)}{R_E}\right).
\]
By an application of the Cauchy-Schwarz inequality it follows that $\norm{L^2(\Omega)}{R_2^h} \leq \norm{L^2(\Omega)}{R_{\Omega,2}} $, \textcolor{corr2}{hence the estimate from above in \eqref{eq:hest} holds with $c^\dag = \left(\max\{ 1,(C^\dag)^2 \}\right)^\half$}. \par
In order to prove the lower bound, we introduce
\[
W_n = \alpha  \sum_{K \in \widetilde{\mathcal{T}}_h^n} h_K^2 \phi_K R_{K,1} - \beta  \sum_{E \in \widetilde{\mathcal{E}}_h^n} h_E \phi_E R_E,
\]
with $\alpha, \beta > 0$, $\phi_K$, $\phi_E$ the baricentrical bubble functions respectively on $K$ and $\omega_E = K_{E,1} \cup K_{E,2}$. Analogously to \cite[Lemma 5.1]{art:verfurth}, we can show that 
\[
\langle R_1^h, W_n \rangle \geq  \left(\sum_{K \in \widetilde{\mathcal{T}}_h^n} h_K^2 \norm{L^2(K)}{R_K}^2 + \sum_{E \in \widetilde{\mathcal{E}}_h^n} h_E \norm{L^2(E)}{R_E}^2 \right)
\] 
and
\[
\norm{H^1(\Omega)}{W_n} \leq \textcolor{corr2}{C_\dag} \left(\sum_{K \in \widetilde{\mathcal{T}}_h^n} h_K^2 \norm{L^2(K)}{R_K}^2 + c_2 \sum_{E \in \widetilde{\mathcal{E}}_h^n} h_E \norm{L^2(E)}{R_E}^2 \right)^\half,
\]
\textcolor{corr2}{which entails that 
\begin{equation}
 \norm{H^*}{R_1^h} \geq \frac{1}{C_\dag} \left(\sum_{K \in \widetilde{\mathcal{T}}_h^n} h_K^2 \norm{L^2(K)}{R_K}^2 + c_2 \sum_{E \in \widetilde{\mathcal{E}}_h^n} h_E \norm{L^2(E)}{R_E}^2 \right)^\half.
\label{eq:Hstar}
\end{equation}
}
Regarding $R_2^h$, the following equality clearly holds
\[
\norm{L^2(\Omega)}{R_2^h}^2 = \int_\Omega R_2^h\ R_{\Omega,2} \dx = \norm{L^2(\Omega)}{R_{\Omega,2}}^2,
\]
\textcolor{corr2}{and this, together with \eqref{eq:Hstar} allows to conclude the lower bound in \eqref{eq:hest} with $\frac{1}{c_\dag} = \left(\min\left\{1,\frac{1}{C_\dag^2}\right\}\right)^\half$}.
\end{proof}
It is now possible to prove the upper bound \eqref{eq:above}.
\begin{proof}[Proof of Theorem \ref{th:apost}]
In view of \eqref{eq:B}, we only need to prove that, for each $n = 1,\ldots,N$, it holds
\begin{equation}
 \norm{L^2(t_{n-1},t_n,H^*)}{R_1}^2 + \norm{L^2((t_{n-1},t_n)\times\Omega)}{R_2}^2 \lesssim \tau_n \left((\eta_{k}^m)^2 + (\vartheta_{k}^m)^2 + (\gamma_{k}^m)^2 \right).
\label{eq:claim}
\end{equation}
According to Lemma \ref{lemma:hest}, 
\[
 \norm{H^*}{R_1^h(t)}^2 + \norm{L^2(\Omega)}{R_2^h(t)}^2 \lesssim (\eta_k^n)^2 \qquad \forall t \in (t_{n-1},t_n),
\]
and since by definition both $R_1^h$ and $R_2^h$ are constant in each interval $(t_{n-1},t_n)$, we conclude that
\begin{equation}
\norm{L^2(t_{n-1},t_n,H^*)}{R_1^h}^2 + \norm{L^2((t_{n-1},t_n)\times\Omega)}{R_2^h}^2 \lesssim \tau_n (\eta_k^n)^2.
\label{eq:4claim1}
\end{equation}
Moreover, it is immediate to verify via Cauchy-Schwarz inequality that
\[
 \norm{H^*}{R_1^k(t)}^2 + \norm{L^2(\Omega)}{R_2^k(t)}^2 \lesssim (\gamma_{k}^n)^2 \qquad \forall t \in (t_{n-1},t_n),
\]
which, integrating on $(t_{n-1},t_n)$ yields
\begin{equation}
\norm{L^2(t_{n-1},t_n,H^*)}{R_1^k}^2 + \norm{L^2((t_{n-1},t_n)\times\Omega)}{R_2^k}^2 \lesssim \tau_n (\gamma_{k}^n)^2.
\label{eq:4claim2}
\end{equation}
Eventually, again by the Cauchy-Schwarz inequality and employing \eqref{eq:interpolants}, for each $t \in (t_{n-1},t_n)$
\[
\begin{aligned}
\norm{H^*}{R_1^\tau(t)} + \norm{L^2(\Omega)}{R_2^\tau(t)} &\leq \mu_{max} \norm{L^2(\Omega)}{\nabla(\uht-\uhkn)} + \norm{L^2(\Omega)}{f(\uht,\wht)-f(\uhkn,\whkn)} \\
& \quad + \norm{L^2(\Omega)}{g(\uht,\wht)-g(\uhkn,\whkn)} \\
&\leq \frac{t_n-t}{\tau_n}\mu_{max}\norm{L^2(\Omega)}{\nabla(\uhkn-\uhnm)} + \norm{L^2(\Omega)}{f(\uht,\wht)-f(\uhkn,\whkn)} \\
& \quad + \norm{L^2(\Omega)}{g(\uht,\wht)-g(\uhkn,\whkn)}.
\end{aligned}
\]
Since $\int_{t_{n-1}}^{t_n} \left(\frac{t_n - t}{\tau_n}\right)^2 = \frac{\tau_n}{3}$, we get
\begin{equation}
\begin{aligned}
& \norm{L^2(t_{n-1},t_n,H^*)}{R_1^\tau}^2 + \norm{L^2((t_{n-1},t_n)\times\Omega)}{R_2^\tau}^2 \lesssim \frac{\tau_n}{3}\norm{L^2(\Omega)}{\nabla(\uhkn-\uhnm)}^2 \\
& \quad + \norm{L^2((t_{n-1},t_n)\times \Omega)}{f(\uht,\wht)-f(\uhkn,\whkn)}^2  \\
& \quad + \norm{L^2((t_{n-1},t_n)\times \Omega)}{g(\uht,\wht)-g(\uhkn,\whkn)}^2 \lesssim \tau_n (\theta_{k}^n)^2.
\end{aligned}
\label{eq:4claim3}
\end{equation}
By means of the triangular inequality, \eqref{eq:4claim1}, \eqref{eq:4claim2} and \eqref{eq:4claim3} we obtain \eqref{eq:claim}, and hence \eqref{eq:above}.
\end{proof}

\subsection{Efficiency of the estimators}
The upper estimate provided in \eqref{eq:above} holds for any choice of $\mathbf{k}$, i.e., the total number of Newton iterations $K_n$ performed in each interval $(t_{n-1},t_n)$ can be selected arbitrarily. We now prove a result of efficiency for our \textit{a posteriori} estimators, which holds true when a specific condition on the indices $K_n$ is satisfied. In particular, for each $n \geq 1$, we assume as in \cite[equation (3.12)]{art:vohralik} that there exists $K_n$ such that
\begin{equation}
\gamma_k^n \leq \sigma \eta_k^n,
\label{eq:sigma}
\end{equation}
being $\sigma < \frac{1}{c_\dag}$, where $c_\dag$ is the constant appearing in Lemma \ref{lemma:hest}. Such an hypothesis \textcolor{corr2}{can be understood as a stopping cryterion for the Newton algorithm associated to each timestep $n$. In particular, \eqref{eq:sigma} prescribes that an iteration $K_n$ is considered acceptable if the correspondent computable indicator of the linearization error is sufficiently smaller than the one associated to the space error.}\\
Moreover, we need to introduce the following assumption on the nonlinear terms $f$ and $g$: $\exists \lambda > 0$ (without loss of generality, we assume $\lambda \leq \mu_{min}$) such that, $\forall u_1,u_2,w_1,w_2 \in \R$,
\begin{equation}
\begin{aligned}
&\left(f(u_1,w_1)-f(u_2,w_2)\right)(u_1-u_2) + \left(g(u_1,w_1)-g(u_2,w_2)\right)(w_1-w_2) \\
&\quad \geq \lambda \left((u_1-u_2)^2 + (w_1-w_2)^2\right).
\end{aligned}
\label{eq:assumption}
\end{equation}
This assumption is verified under small modifications of the original problem by a large class of models, including Aliev-Panfilov, see Remark \ref{rem:assumption}. 

\begin{theorem}
Let $f,g$ satisfy \eqref{eq:assumption} and let $(\{u_{h,k}^n\}\{w_{h,k}^n\})$, $n = 0,\ldots,N$, $k = 0,\ldots,K_n$ be the fully discrete solution of \eqref{eq:formaforte} obtained by the Newton scheme \eqref{eq:Newt1}-\eqref{eq:Newt2}, satisfying assumption \eqref{eq:sigma} on the choice of $K_n$. Then,
\begin{equation}
\begin{aligned}
  \sqrt{\tau_n} ((\eta_{k}^n)^2 + (\vartheta_{k}^n)^2 + (\gamma_{k}^n)^2 )^\half \lesssim &  \left\{\norm{X(t_{n-1},t_n)}{u-\uht}^2 + \norm{Y(t_{n-1},t_n)}{w-\wht}^2\right\}^\half,
\end{aligned}
\label{eq:below}
\end{equation}
being $\uht$, $\wht$ the interpolants defined in \eqref{eq:interpolants}.
\label{th:efficiency}
\end{theorem}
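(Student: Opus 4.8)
The plan is to prove \eqref{eq:below} \emph{one time slab at a time}. Fix $n$ and set, for brevity, $\mathcal{R}_n^2:=\norm{L^2(t_{n-1},t_n;H^*)}{R_1}^2+\norm{L^2((t_{n-1},t_n)\times\Omega)}{R_2}^2$; the goal is to show $\sqrt{\tau_n}\big((\eta_k^n)^2+(\vartheta_k^n)^2+(\gamma_k^n)^2\big)^{1/2}\lesssim\mathcal{R}_n$, after which \eqref{eq:A} applied on the interval $(t_{n-1},t_n)$ (not on $(0,T)$) bounds $\mathcal{R}_n$ by $c^*\big(\norm{X(t_{n-1},t_n)}{u-\uht}^2+\norm{Y(t_{n-1},t_n)}{w-\wht}^2\big)^{1/2}$, which is exactly the right-hand side of \eqref{eq:below}. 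The three ingredients are: Lemma~\ref{lemma:hest}, which (being valid pointwise in $t$, with $R_1^h,R_2^h$ piecewise constant in time) integrates to $\tau_n(\eta_k^n)^2\le c_\dag^2\big(\norm{L^2(t_{n-1},t_n;H^*)}{R_1^h}^2+\norm{L^2((t_{n-1},t_n)\times\Omega)}{R_2^h}^2\big)$; the splitting $R_i=R_i^h+R_i^\tau+R_i^k$ together with the Galerkin orthogonality \eqref{eq:orthogonality}; and the stopping criterion \eqref{eq:sigma}, which gives $\tau_n(\gamma_k^n)^2\le\sigma^2\tau_n(\eta_k^n)^2$. Throughout I use that $f,g$ are Lipschitz (constants $K_f,K_g$) on the bounded set $S_{\delta_0}$ in which the discrete quantities and the interpolants $\uht,\wht$ take their values, and the identity $\uht-\uhkn=\tfrac{t_n-t}{\tau_n}(\uhnm-\uhkn)$ from \eqref{eq:interpolants}, whence $\int_{t_{n-1}}^{t_n}\norm{L^2(\Omega)}{\sqrt{M}\nabla(\uht-\uhkn)}^2\,dt=\tfrac{\tau_n}{3}\norm{L^2(\Omega)}{\sqrt{M}\nabla(\uhkn-\uhnm)}^2$, and similarly for the $f$- and $g$-difference terms of $\vartheta_k^n$.

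\textbf{Time indicator (here assumption \eqref{eq:assumption} is indispensable).} For a.e.\ $t\in(t_{n-1},t_n)$, $\uht-\uhkn$ and $\wht-\whkn$ lie in $V_h^n$, so testing $R_1=R_1^h+R_1^\tau+R_1^k$ against $\varphi=\uht-\uhkn$ and $R_2$ against $\psi=\wht-\whkn$ kills the $R^h$-parts via \eqref{eq:orthogonality}; inserting the explicit expressions \eqref{eq:Rt1}, \eqref{eq:Rt2}, \eqref{eq:Rk1}, \eqref{eq:Rk2} gives the identity
\begin{align*}
\norm{L^2(\Omega)}{\sqrt{M}\nabla(\uht-\uhkn)}^2 &+ \int_\Omega\big(f(\uht,\wht)-f(\uhkn,\whkn)\big)(\uht-\uhkn)\,\dx+\int_\Omega\big(g(\uht,\wht)-g(\uhkn,\whkn)\big)(\wht-\whkn)\,\dx \\
&= -\langle R_1,\uht-\uhkn\rangle-\langle R_2,\wht-\whkn\rangle+\int_\Omega Q_1(\uht-\uhkn)\,\dx+\int_\Omega Q_2(\wht-\whkn)\,\dx .
\end{align*}
By \eqref{eq:assumption} the two bracketed integrals dominate $\lambda\big(\norm{L^2(\Omega)}{\uht-\uhkn}^2+\norm{L^2(\Omega)}{\wht-\whkn}^2\big)$; estimating the right-hand side by Cauchy--Schwarz and Young (with $\norm{H^1(\Omega)}{\cdot}^2\le\norm{L^2(\Omega)}{\cdot}^2+\mu_{min}^{-1}\norm{L^2(\Omega)}{\sqrt{M}\nabla\cdot}^2$), absorbing, and integrating over $(t_{n-1},t_n)$ — using the interpolation identity above and $\int_{t_{n-1}}^{t_n}(\norm{L^2(\Omega)}{Q_1}^2+\norm{L^2(\Omega)}{Q_2}^2)\,dt=\tau_n(\gamma_k^n)^2$ since $Q_1,Q_2$ are time-constant — one arrives at
\[
\tfrac{\tau_n}{3}\norm{L^2(\Omega)}{\sqrt{M}\nabla(\uhkn-\uhnm)}^2+\norm{L^2((t_{n-1},t_n)\times\Omega)}{\uht-\uhkn}^2+\norm{L^2((t_{n-1},t_n)\times\Omega)}{\wht-\whkn}^2\lesssim\mathcal{R}_n^2+\tau_n(\gamma_k^n)^2 .
\]
Since Lipschitz continuity bounds $\norm{L^2((t_{n-1},t_n)\times\Omega)}{P_1}^2$ by $2K_f^2\big(\norm{L^2((t_{n-1},t_n)\times\Omega)}{\uht-\uhkn}^2+\norm{L^2((t_{n-1},t_n)\times\Omega)}{\wht-\whkn}^2\big)$, and similarly for $P_2$ with $K_g$, the definition of $\vartheta_k^n$ yields $\tau_n(\vartheta_k^n)^2\lesssim\mathcal{R}_n^2+\tau_n(\gamma_k^n)^2$.

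\textbf{Space and linearization indicators, and closing the loop.} From \eqref{eq:Rk1}--\eqref{eq:Rk2}, $\norm{H^*}{R_1^k(t)}\le\norm{L^2(\Omega)}{Q_1}$ and $\norm{L^2(\Omega)}{R_2^k(t)}=\norm{L^2(\Omega)}{Q_2}$, hence $\norm{L^2(t_{n-1},t_n;H^*)}{R_1^k}^2+\norm{L^2((t_{n-1},t_n)\times\Omega)}{R_2^k}^2\le\tau_n(\gamma_k^n)^2\le\sigma^2\tau_n(\eta_k^n)^2$; and from \eqref{eq:Rt1}--\eqref{eq:Rt2}, $\norm{H^*}{R_1^\tau(t)}\le\mu_{max}^{1/2}\norm{L^2(\Omega)}{\sqrt{M}\nabla(\uht-\uhkn)}+\norm{L^2(\Omega)}{P_1}$ and $\norm{L^2(\Omega)}{R_2^\tau(t)}=\norm{L^2(\Omega)}{P_2}$, which after integration gives $\norm{L^2(t_{n-1},t_n;H^*)}{R_1^\tau}^2+\norm{L^2((t_{n-1},t_n)\times\Omega)}{R_2^\tau}^2\lesssim\tau_n(\vartheta_k^n)^2$. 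Then Lemma~\ref{lemma:hest} with $R_i^h=R_i-R_i^\tau-R_i^k$ and the triangle inequality yield
\[
\tau_n(\eta_k^n)^2\lesssim\mathcal{R}_n^2+\tau_n(\vartheta_k^n)^2+\tau_n(\gamma_k^n)^2\lesssim\mathcal{R}_n^2+\big(\mathcal{R}_n^2+\sigma^2\tau_n(\eta_k^n)^2\big)+\sigma^2\tau_n(\eta_k^n)^2\le C\mathcal{R}_n^2+C\sigma^2\tau_n(\eta_k^n)^2,
\]
with $C$ explicit in $c_\dag,\mu_{max},\mu_{min},K_f,K_g,\lambda$, where I substituted the previous step's bound on $\tau_n(\vartheta_k^n)^2$ and used $\tau_n(\gamma_k^n)^2\le\sigma^2\tau_n(\eta_k^n)^2$ once more. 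Since $\sigma$ is small — this is where the hypothesis $\sigma<1/c_\dag$ enters, the point being that the constant relating the norms of $R_1^k,R_2^k$ to $\gamma_k^n$ is exactly $1$, so only the benign $\mu$- and Lipschitz-constants need to be kept under control — one has $C\sigma^2<1$, the last term is absorbed into the left-hand side, and $\tau_n(\eta_k^n)^2\lesssim\mathcal{R}_n^2$; feeding this back into the bounds for $\vartheta_k^n$ and $\gamma_k^n$ closes the loop.

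\textbf{Conclusion and main obstacle.} Adding the three controls gives $\tau_n\big((\eta_k^n)^2+(\vartheta_k^n)^2+(\gamma_k^n)^2\big)\lesssim\mathcal{R}_n^2$, and \eqref{eq:A} on $(t_{n-1},t_n)$ then produces \eqref{eq:below}. I expect the time-indicator step to be the main difficulty: one must first realise that $\uht-\uhkn$, $\wht-\whkn$ are the correct test functions — crucially \emph{discrete}, so that by \eqref{eq:orthogonality} the noncomputable element-residuals and jump-residuals hidden in $R^h$ drop out — and then one must invoke the monotonicity hypothesis \eqref{eq:assumption} (Lipschitz continuity alone is \emph{not} enough) to extract from a single residual pairing the simultaneous control of $\norm{L^2(\Omega)}{\sqrt{M}\nabla(\uhkn-\uhnm)}$ \emph{and} of $\norm{L^2(\Omega)}{\uht-\uhkn}$, $\norm{L^2(\Omega)}{\wht-\whkn}$, the latter being precisely what is needed to bound the reaction terms $P_1,P_2$ of $\vartheta_k^n$. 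The remaining technical obstacle is the algebraic closure of the $\eta$--$\vartheta$--$\gamma$ coupling, which rests on $\sigma$ being small enough.
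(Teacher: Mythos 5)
Your overall strategy is the same as the paper's (reduce to bounding the slab--wise residual norms via \eqref{eq:A}, use the monotonicity assumption \eqref{eq:assumption} with the test functions $\uht-\uhkn$, $\wht-\whkn$, use the lower bound of Lemma~\ref{lemma:hest} for $\eta_k^n$, and close with \eqref{eq:sigma}), but there are two genuine gaps in the way you close the argument.

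The more serious one is the final absorption. Your chain ends with $\tau_n(\eta_k^n)^2\le C\,\mathcal{R}_n^2+C\sigma^2\,\tau_n(\eta_k^n)^2$, where $C$ necessarily contains, besides $c_\dag$, the factor $3$ from the triangle inequality on $R^h=R-R^\tau-R^k$, the constant relating $\norm{L^2(t_{n-1},t_n;H^*)}{R_1^\tau}^2$ to $\tau_n(\vartheta_k^n)^2$ (which involves $\mu_{max},\mu_{min}$), and the constant in your bound $\tau_n(\vartheta_k^n)^2\lesssim\mathcal{R}_n^2+\tau_n(\gamma_k^n)^2$ (which involves $K_f,K_g,\lambda,\mu_{min}$). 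The hypothesis \eqref{eq:sigma} only guarantees $\sigma<1/c_\dag$; it does \emph{not} make $C\sigma^2<1$, so the claim ``one has $C\sigma^2<1$'' is unjustified and your argument proves the theorem only under a strictly stronger smallness condition on $\sigma$. This is exactly the difficulty the paper's proof is built to avoid: it tests $R_1^h,R_2^h$ with the functions $W_n,R_{\Omega,2}$ of Lemma~\ref{lemma:hest} to get the pointwise-in-time inequality \eqref{eq:starstar}, in which the dangerous coupling term carries the weight $\tfrac{t_n-t}{\tau_n}$; multiplying by Verf\"urth's weight $(\alpha+1)\left(\tfrac{t-t_{n-1}}{\tau_n}\right)^\alpha$ and integrating makes that term's coefficient $\tfrac{2}{\alpha+2}K_{fg}$ arbitrarily small for large $\alpha$, so that after inserting \eqref{eq:0} the coefficient of $\eta_k^n$ on the right reduces to $\tfrac{6K_{fg}(\sigma+c^\dag)}{\lambda(\alpha+2)}+\sigma$, and absorption needs precisely $\sigma<1/c_\dag$ and nothing more. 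Without this (or an equivalent device) the efficiency result as stated is not proved.

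The second gap is your use of the Galerkin orthogonality \eqref{eq:orthogonality} with the test function $\uht-\uhkn=\tfrac{t_n-t}{\tau_n}(\uhnm-\uhkn)$. This requires $\uhnm\in V_h^n$, which fails in general because the tessellations may change between time steps: $\uhnm$ lives in $V_h^{n-1}$ (built on $\widetilde{\mathcal{T}}_h^{n-1}$), which need not be contained in $V_h^n$ (built on $\widetilde{\mathcal{T}}_h^n$) --- indeed the scheme initializes with the projection $\Pi_h^n\uhnm$ precisely for this reason. The paper does not discard the $R^h$-pairing but bounds it by $\tau_n c^\dag\eta_k^n\bigl(\norm{H^1(\Omega)}{\uhkn-\uhnm}^2+\norm{L^2(\Omega)}{\whkn-\whnm}^2\bigr)^{1/2}$ using Lemma~\ref{lemma:hest}, which is why an $\eta_k^n$ term appears in \eqref{eq:0}. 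Repairing this in your argument introduces a further $\eta$--$\vartheta$ coupling that feeds into the same absorption problem described above.
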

\begin{proof}
First of all, we exploit the assumption \eqref{eq:assumption} on $f,g$ to obtain a useful inequality. Consider the temporal residual operators $R_1^\tau$, $R_2^\tau$ with test functions \textcolor{corr}{$\varphi_1 = \uht - \uhkn = -\frac{t_n-t}{\tau_n} (\uhkn - \uhnm)$, $\psi_1 = \wht - \whkn = -\frac{t_n-t}{\tau_n} (\whkn - \whnm)$}:
\[
\begin{aligned}
\langle R_1^\tau,\varphi_1 \rangle + \langle R_2^\tau,\psi_1 \rangle &\geq \mu_{min} \norm{L^2(\Omega)}{\nabla(\uht - \uhkn)}^2 + \lambda\left( \norm{L^2(\Omega)}{\uht - \uhkn}^2 +  \norm{L^2(\Omega)}{\wht - \whkn}^2 \right) \\
&\geq \lambda \textcolor{corr}{\left(\frac{t_{n}-t}{\tau_n}\right)^2} \left( \norm{H^1(\Omega)}{\uhkn- \uhnm}^2 +  \norm{L^2(\Omega)}{\whkn- \whnm}^2 \right).
\end{aligned}
\]
\textcolor{corr}{We recall that
\[
 \langle R_1^\tau,\varphi_1 \rangle + \langle R_2^\tau,\psi_1 \rangle = \langle R_1,\varphi_1 \rangle + \langle R_2,\psi_1 \rangle - \langle R_1^h,\varphi_1 \rangle - \langle R_2^h,\psi_1 \rangle - \langle R_1^k,\varphi_1 \rangle - \langle R_2^k,\psi_1 \rangle;
\]
when integrating in time, we can bound the right-hand side by considering two terms at a time as follows:
\[
\begin{aligned}
&\int_{t_{n-1}}^{t_n} \left|\langle R_1,\varphi_1 \rangle + \langle R_2,\psi_1 \rangle \right| dt \leq \int_{t_{n-1}}^{t_n} \left(\norm{H^*}{R_1}\norm{H^1(\Omega)}{\varphi_1} + \norm{L^2(\Omega)}{R_2}\norm{L^2(\Omega)}{\psi_1}\right) dt \\
& \quad \leq \int_{t_{n-1}}^{t_n} \left(\norm{H^*}{R_1} + \norm{L^2(\Omega)}{R_2}\right) \left(\norm{H^1(\Omega)}{\varphi_1} + \norm{L^2(\Omega)}{\psi_1}\right) dt\\
& \quad \leq \int_{t_{n-1}}^{t_n} \frac{t_n-t}{\tau_n} \left(\norm{H^*}{R_1} + \norm{L^2(\Omega)}{R_2}\right) \left(\norm{H^1(\Omega)}{\uhkn-\uhnm} + \norm{L^2(\Omega)}{\whkn-\whnm}\right) dt \\
& \quad \leq 2 \left(\norm{H^1(\Omega)}{\uhkn-\uhnm}^2 + \norm{L^2(\Omega)}{\whkn-\whnm}^2\right)^{1/2} \\
& \qquad \int_{t_{n-1}}^{t_n} \frac{t_n-t}{\tau_n} \left(\norm{H^*}{R_1}^2 + \norm{L^2(\Omega)}{R_2}^2\right)^{1/2} dt \\
& \quad \leq 2 \left(\norm{H^1(\Omega)}{\uhkn-\uhnm}^2 + \norm{L^2(\Omega)}{\whkn-\whnm}^2\right)^{1/2}  \\
& \qquad \left( \int_{t_{n-1}}^{t_n} \frac{(t_n-t)^2}{\tau_n^2} dt \right)^{1/2} \left( \norm{L^2(t_{n-1},t_n;H^*)}{R_1}^2 + \norm{L^2(t_{n-1},t_n;L^2(\Omega))}{R_2}^2 \right)^{1/2} \\
& \quad \leq 2 \frac{\sqrt{\tau_n}}{\sqrt{3}} c^* \norm{XY}{err} \left( \norm{H^1(\Omega)}{\uhkn-\uhnm}^2 + \norm{L^2(\Omega)}{\whkn-\whnm}^2 \right)^{1/2},
\end{aligned}
\]
where we set $\norm{XY}{err} \defeq \left(\norm{X(t_{n-1},t_n)}{u-\uht}^2+\norm{Y(t_{n-1},t_n)}{w-\wht}^2\right)^\half$ and we made use of \eqref{eq:A} and of the Jensen inequality $A+B \leq \sqrt{2}(A^2 + B^2)^\half$. Moreover, via \eqref{eq:hest} we get
\[
\begin{aligned}
&\int_{t_{n-1}}^{t_n} \left|\langle R_1^h,\varphi_1 \rangle + \langle R_2^h,\psi_1 \rangle\right|dt \leq \int_{t_{n-1}}^{t_n} \left(\norm{H^*}{R_1^h}\norm{H^1(\Omega)}{\varphi_1} + \norm{L^2(\Omega)}{R_2^h}\norm{L^2(\Omega)}{\psi_1} \right) dt \\
& \quad \leq \int_{t_{n-1}}^{t_n} \frac{t_n-t}{\tau_n} \left(\norm{H^*}{R_1^h}+\norm{L^2(\Omega)}{R_2^h}\right)\left(\norm{H^1(\Omega)}{\uhkn-\uhnm} + \norm{L^2(\Omega)}{\whkn-\whnm}\right) dt \\
& \quad \leq 2\int_{t_{n-1}}^{t_n} \frac{t_n-t}{\tau_n} dt \left(\norm{H^*}{R_1^h}^2+\norm{L^2(\Omega)}{R_2^h}^2\right)^{1/2} \left(\norm{H^1(\Omega)}{\uhkn-\uhnm}^2 + \norm{L^2(\Omega)}{\whkn-\whnm}^2\right)^{1/2} \\
& \quad \leq \tau_n c^\dag \eta_k^n \left(\norm{H^1(\Omega)}{\uhkn-\uhnm}^2 + \norm{L^2(\Omega)}{\whkn-\whnm}^2\right)^{1/2}.
\end{aligned}
\]
Eventually, by the definition of $R_1^k, R_2^k$ and $\gamma_k^n$,
\[
\begin{aligned}
&\int_{t_{n-1}}^{t_n} \left| \langle R_1^k,\varphi_1 \rangle + \langle R_2^k,\psi_1 \rangle \right| dt \leq \int_{t_{n-1}}^{t_n} \left| \int_\Omega Q_1\varphi_1 \dx + \int_\Omega Q_2 \psi_1 \dx \right| dt \\
& \quad \leq \int_{t_{n-1}}^{t_n} \left(\norm{L^2(\Omega)}{Q_1}\norm{L^2(\Omega)}{\varphi_1}+\norm{L^2(\Omega)}{Q_2}\norm{L^2(\Omega)}{\psi_1}\right) dt \\
& \quad \leq \int_{t_{n-1}}^{t_n}\frac{t_n-t}{\tau_n} dt \left( \norm{L^2(\Omega)}{Q_1}^2+\norm{L^2(\Omega)}{Q_2}^2\right)^{1/2}\left(\norm{H^1(\Omega)}{\uhkn-\uhnm}^2 + \norm{L^2(\Omega)}{\whkn-\whnm}^2\right)^{1/2} \\
& \quad = \tau_n \gamma_k^n \left(\norm{H^1(\Omega)}{\uhkn-\uhnm}^2 + \norm{L^2(\Omega)}{\whkn-\whnm}^2\right)^{1/2}.
\end{aligned}
\]
This allows to conclude that
}
\begin{equation}
\lambda \frac{\tau_n}{3} \left(\norm{H^1(\Omega)}{\uhkn- \uhnm}^2 +  \norm{L^2(\Omega)}{\whkn- \whnm}^2 \right)^\half \leq \textcolor{corr}{\frac{2\sqrt{\tau_n}}{\sqrt{3}}} c^* \norm{XY}{err} + \textcolor{corr}{\tau_n} c^\dag \eta_k^n + \textcolor{corr}{\tau_n} \gamma_{k}^n.
\label{eq:0}
\end{equation}
\par
We focus now on the spatial estimator $\eta_k^n$. According to the proof of Lemma \ref{lemma:hest}, \textcolor{corr2}{for the particular choice of test functions $\varphi_2 = W_n$, $\psi_2 = R_{\Omega,2}$, it holds that}
\[
\langle R_1^h,\varphi_2 \rangle + \langle R_2^h,\psi_2 \rangle \geq (\eta_k^n)^2, \qquad \left(\norm{H^1(\Omega)}{\varphi_2}^2+\norm{L^2(\Omega)}{\psi_2}^2\right)^\half\leq c_\dag \eta_k^n
\] 
whence
\[
\frac{1}{c_\dag} \eta_k^n \left(\norm{H^1(\Omega)}{\varphi_2}^2+\norm{L^2(\Omega)}{\psi_2}^2\right)^\half \leq \langle R_1^h,\varphi_2 \rangle + \langle R_2^h,\psi_2 \rangle.
\]
By the decomposition of the residual, $R_1^h = R_1 -R_1^\tau - R_1^k$ and $R_2^h = R_2 -R_2^\tau - R_2^k$. Moreover,
\[
\begin{aligned}
\left|\langle R_1^\tau,\varphi_2 \rangle + \langle R_2^\tau,\psi_2 \rangle\right| &\leq \mu_{max} \int_\Omega \left| \nabla(\uht - \uhkn) \cdot \nabla \varphi_2 \right|\dx + \int_\Omega \left| \left[f(\uht,\wht) - f(\uhkn,\whkn) \right]\varphi_2 \right|\dx \\
& \quad + \int_\Omega \left| \left[g(\uht,\wht) - g(\uhkn,\whkn) \right]\psi_2 \right|\dx\\
& \leq \mu_{max}\norm{L^2(\Omega)}{\nabla(\uht - \uhkn)}\norm{L^2(\Omega)}{\nabla \varphi_2} \\
& \quad + K_f \left(\norm{L^2(\Omega)}{\uht - \uhkn}+\norm{L^2(\Omega)}{\wht - \whkn}\right)\norm{L^2(\Omega)}{\varphi_2} \\
& \quad + K_g \left(\norm{L^2(\Omega)}{\uht - \uhkn}+\norm{L^2(\Omega)}{\wht - \whkn}\right)\norm{L^2(\Omega)}{\psi_2}\\
& \leq K_{fg} \left(\norm{H^1(\Omega)}{\uht - \uhkn}+\norm{L^2(\Omega)}{\wht - \whkn}\right) \left(\norm{H^1(\Omega)}{\varphi_2} + \norm{L^2(\Omega)}{\psi_2}\right),
\end{aligned}
\]
where $K_f$ and $K_g$ are the Lipschitz constants of $f$ and $g$ and $K_{fg} = max\{\mu_{max},K_f,K_g\}$. Exploiting the Cauchy-Schwarz and the Jensen inequalities and the definition of $\gamma_k^n$,
\[
\begin{aligned}
\frac{1}{c_\dag} \eta_k^n \leq& 2 \left(\norm{H^*}{R_1}^2 + \norm{L^2(\Omega)}{R_2}^2\right)^\half + 2\gamma_{k}^n + 2 K_{fg}\left(\norm{H^1(\Omega)}{\uht - \uhkn}^2+\norm{L^2(\Omega)}{\wht - \whkn}^2\right)^\half ,
\end{aligned}
\]
and since $\uht - \uhkn = \frac{t_n - t}{\tau_n}(\uhkn-\uhnm)$, we have
\begin{equation}
\begin{aligned}
\frac{1}{c_\dag} \eta_k^n \leq& 2 \left(\norm{H^*}{R_1}^2 + \norm{L^2(\Omega)}{R_2}^2\right)^\half + 2\gamma_{k}^n \\
& + 2 \frac{t_n - t}{\tau_n}K_{fg} \left(\norm{H^1(\Omega)}{\uhkn-\uhnm}^2+\norm{L^2(\Omega)}{\whkn-\whnm}^2\right)^\half.
\end{aligned}
\label{eq:starstar}
\end{equation}
Now, we take advantage of the strategy used in the proof of the lower bound in \cite{art:verfurth}, in particular, choosing a positive $\alpha$, we multiply the inequality \eqref{eq:starstar} by $(\alpha+1)\left(\frac{t-t_{n-1}}{\tau_n}\right)^\alpha$ and integrate from $t_{n-1}$ to $t_n$. We observe that
\[
\begin{aligned}
 &\int_{t_{n-1}}^{t_n} (\alpha+1)\left(\frac{t-t_{n-1}}{\tau_n}\right)^\alpha dt= \tau_n ; \\
 &\int_{t_{n-1}}^{t_n} \left(\frac{t-t_{n-1}}{\tau_n}\right)^\alpha (\alpha+1)\left(\frac{t_{n}-t}{\tau_n}\right)dt = \tau_n \frac{1}{\alpha+2}; \\
 &\int_{t_{n-1}}^{t_n} (\alpha+1)\left(\frac{t-t_{n-1}}{\tau_n}\right)^\alpha \left(\norm{H^*}{R_1}^2 + \norm{L^2(\Omega)}{R_2}^2\right)^\half dt\leq \\
& \quad\sqrt{\tau_n} \frac{\alpha +1}{\sqrt{2\alpha+1}} \left( \norm{L^2(t_{n-1},t_n;H^*)}{R_1}^2 + \norm{L^2((t_{n-1},t_n)\times \Omega)}{R_2}^2 \right)^\half .
\end{aligned}
\]
Thus, we obtain (applying \eqref{eq:A} and \eqref{eq:0})
\[
\begin{aligned}
\frac{1}{c_\dag} \tau_n \eta_k^n \leq& \sqrt{\tau_n} \frac{\alpha +1}{\sqrt{2\alpha+1}} c^*\norm{XY}{err} + \tau_n \gamma_{k}^n \\
& \quad + \tau_n \frac{2}{\alpha+2} K_{fg} \left( \norm{L^2(\Omega)}{\uhkn- \uhnm}^2 + \norm{L^2(\Omega)}{\whkn- \whnm}^2 \right)^{1/2} \\
\leq& \sqrt{\tau_n} \frac{\alpha +1}{\sqrt{2\alpha+1}} c^*\norm{XY}{err} + \tau_n \gamma_{k}^n \\
& \quad + \frac{\textcolor{corr}{6} K_{fg}}{(\alpha+2)\lambda} \left(\textcolor{corr}{\frac{2\sqrt{\tau_n}}{\sqrt{3}}} c^*  \norm{XY}{err} + \tau_n c^\dag \eta_k^n + \tau_n \gamma_{k}^n \right).
\end{aligned}
\]
Taking advantage of the assumtpion \eqref{eq:sigma} and dividing by $\sqrt{\tau_n}$, we get
\begin{equation}
\frac{1}{c_\dag} \sqrt{\tau_n} \eta_k^n \leq  c^*\left( \frac{\alpha +1}{\sqrt{2\alpha+1}} + \frac{\textcolor{corr}{4\sqrt{3}} K_{fg}}{\lambda (\alpha+2)} \right) \norm{XY}{err} + \sqrt{\tau_n} \left( \frac{\textcolor{corr}{6} K_{fg}(\sigma + c^\dag)}{\lambda (\alpha+2)} + \sigma \right)\eta_k^n.
\label{eq:almost}
\end{equation}
\textcolor{corr2}{Since by assumption \eqref{eq:sigma} $\frac{1}{c_\dag} - \sigma >0$, selecting
\[
\alpha = \max\left\{0, \frac{6 K_{fg}(c^\dag + \sigma)c_\dag}{\lambda (1 - {c_\dag}\sigma)} -2 \right\}
\]
we can ensure that
\[
\frac{6 K_{fg}(\sigma + c^\dag)}{\lambda (\alpha+2)} + \sigma < \frac{1}{c_\dag}.
\]
}
Thus, we deduce
\begin{equation}
	\sqrt{\tau_n} \eta_k^n \lesssim  \norm{XY}{err};
\label{eq:3}
\end{equation}
from now on, we omit the explicit expression of the constants in front of each term in the inequality. 
As an immediate consequence, again by \eqref{eq:sigma}, we infer
\begin{equation}
	\sqrt{\tau_n}\gamma_k^n \leq \sqrt{\tau_n} \sigma \eta_k^n \lesssim \norm{XY}{err}.
\label{eq:42}
\end{equation}
We now focus on $\theta_{k}^n$. By definition,
\[
\begin{aligned}
(\vartheta_{k}^n)^2 = & \frac{1}{3} \norm{L^2(\Omega)}{M^{1/2}\nabla(\uhkn-\uhnm)}^2 + \frac{1}{\tau_n} \norm{L^2((t_{n-1},t_n)\times \Omega)}{f(\uht,\wht)-f(\uhkn,\whkn)}^2 \\&+ \frac{1}{\tau_n} \norm{L^2((t_{n-1},t_n)\times \Omega)}{g(\uht,\wht)-g(\uhkn,\whkn)}^2 
 \\
& \leq \frac{1}{3} \norm{L^2(\Omega)}{M^{1/2}\nabla(\uhkn-\uhnm)}^2 + \frac{K_{fg}^2}{\tau_n} \int_{t_{n-1}}^{t_n} \left( \norm{L^2(\Omega)}{\uht-\uhkn}^2 + \norm{L^2(\Omega)}{\wht-\whkn}^2 \right)dt\\
& \leq \frac{1}{3} \norm{L^2(\Omega)}{M^{1/2}\nabla(\uhkn-\uhnm)}^2 + \frac{K_{fg}^2}{3} \left( \norm{L^2(\Omega)}{\uhkn-\uhnm}^2 + \norm{L^2(\Omega)}{\whkn-\whnm}^2 \right)\\
& \leq \frac{K_{fg}^2}{3} \left( \norm{H^1(\Omega)}{\uhkn-\uhnm}^2 + \norm{L^2(\Omega)}{\whkn-\whnm}^2 \right).
\end{aligned}
\]
Therefore, in view of \eqref{eq:0}
\[
\begin{aligned}
\vartheta_{k}^n \leq & \textcolor{corr}{\frac{K_{fg}}{\sqrt{3}}}\left( \norm{H^1(\Omega)}{\uhkn-\uhnm}^2 + \norm{L^2(\Omega)}{\whkn-\whnm}^2 \right)^\half \\ \leq & \textcolor{corr}{\frac{\sqrt{3}K_{fg}}{\lambda \tau_n}} \left( \textcolor{corr}{\frac{2 \sqrt{\tau_n}}{\sqrt{3}}} c^*  \norm{XY}{err} + \textcolor{corr}{\tau_n} c^\dag \eta_k^n + \tau_n \gamma_{k}^n \right),
\end{aligned}
\]
and eventually (using \eqref{eq:3} and \eqref{eq:42})
\begin{equation}
\sqrt{\tau_n} \vartheta_{k}^n \leq \frac{K_{fg}^2}{\lambda} \left( \textcolor{corr}{\frac{2 \sqrt{\tau_n}}{\sqrt{3}}} c^* \norm{XY}{err} + \textcolor{corr}{\sqrt{\tau_n}} c^\dag \eta_k^n + \sqrt{\tau_n} \gamma_{k}^n \right) \lesssim \norm{XY}{err}.
\label{eq:52}
\end{equation}
Eventually, collecting the results \eqref{eq:3}, \eqref{eq:42}, \eqref{eq:52} we conclude that
\begin{equation}
 \sqrt{\tau_n}\left( (\eta_k^n)^2 + (\theta_k^n)^2 + (\gamma_k^n)^2 \right)^\half \leq \sqrt{\tau_n}\left( \eta_k^n + \theta_k^n + \gamma_k^n \right) \lesssim \norm{XY}{err}.
\label{eq:star}
\end{equation}
\end{proof}


\begin{remark}
Assumption \eqref{eq:assumption} is in general not satisfied by $f$ and $g$ as in \eqref{eq:AP}. In particular, inequality \eqref{eq:assumption} holds with a possibly negative constant, $-\tilde{K}$. This can be deduced by mean value theorem, exploiting the fact that $f,g$ in \eqref{eq:AP} are continuously differentiable and take values on a bounded subset of $\R^2$ due to the uniform \textit{a priori} bounds on the solutions prescribed in Proposition \ref{prop:prel}. However, we can introduce a change of variable in the original problem \eqref{eq:formaforte}: for a positive $\lambda$, we set $\tilde{u} = e^{-(\tilde{K}+\lambda)t}u$ and $\tilde{w} = e^{-(\tilde{K}+\lambda)t}w$. It holds $\partial_t \tilde{u} = -(\tilde{K}+\lambda) \tilde{u} + e^{-(\tilde{K}+\lambda)t} \partial_t u$, and $(\tilde{u},\tilde{w})$ is the solution of 
\[
\left\{
\begin{aligned}
\partial_t \tilde{u} - \nabla \cdot(\EM\nabla \tilde{u}) + \tilde{f}(\tilde{u},\tilde{w}) &= 0 \qquad &\text{in } \Omega \times (0,T) \\
\partial_t \tilde{w} + \tilde{g}(\tilde{u},\tilde{w}) &= 0 \qquad &\text{in }  \Omega \times (0,T), \\
\end{aligned}
\right.
\]
where $\tilde{f} = e^{-(\tilde{K}+\lambda)t} f(e^{(\tilde{K}+\lambda)t}\tilde{u},e^{(\tilde{K}+\lambda)t}\tilde{w}) + (\tilde{K}+\lambda)\tilde{u}$ and $\tilde{g}$ (analogously defined) satisfy \eqref{eq:assumption}.
\label{rem:assumption}
\end{remark}

\begin{remark}
In the particular case where the source of error coming from the linearization process is disregarded, the simplified counterpart of Theorem \ref{th:apost} holds with the only estimators $\eta^n$, $\theta^n$ defined as
%
\begin{equation}
\begin{aligned}
\eta^n =& \Biggl( \sum_{K \in \tilde{\Tau}_h^n} h_K^2 \norm{L^2(K)}{\frac{u_h^n-u_h^{n-1}}{\tau_n} + \nabla \cdot(M \nabla u_h^n) + f(u_h^n,w_h^n)}^2 + \sum_{E \in \tilde{\mathcal{E}}_h^n}h_K \norm{L^2(E)}{[\nabla u_h^n \cdot n_E]}^2 \\
& + \sum_{K \in \tilde{\Tau}_h^n} \norm{L^2(K)}{\frac{w_h^n-w_h^{n-1}}{\tau_n} + g(u_h^n,w_h^n)}^2 \Biggr)^\half \\
\vartheta^n =& \Biggl(  \frac{1}{3} \norm{L^2(\Omega)}{M^{1/2} \nabla(u_h^n - u_h^{n-1})}^2 + \frac{1}{\tau_n} \norm{L^2((t_{n-1},t_n)\times \Omega)}{(f(\uhti)-f(u_h^n))}^2 \\
& + \frac{1}{\tau_n} \norm{L^2((t_{n-1},t_n)\times \Omega)}{(g(\uhti)-g(u_h^n))}^2  \Biggr)^\half,
\end{aligned}
\label{eq:estEI}
\end{equation}
being $\uhti = \frac{t_n - t}{\tau_n} u_h^{n-1} + \frac{t-t_{n-1}}{\tau_n} u_h^n$ and $\whti = \frac{t_n - t}{\tau_n} w_h^{n-1} + \frac{t-t_{n-1}}{\tau_n} w_h^n$. An efficiency result analogous to Theorem \ref{th:efficiency} holds with the same estimators, clearly without requiring \eqref{eq:sigma}.
\label{rem:EI}
\end{remark}

\section{Numerical experiments}
\label{results}
We now numerically assess the validity of the derived \textit{a posteriori} estimates. We consider the following two-dimensional setup: the domain $\Omega$ is the square $(0,1)^2$, whereas the time interval is set equal to $(0,16)$. All the experiments are performed in an isotropic tissue, whence $M$ is a scalar coefficient. We consider the initial data
\[
u_0 = e^{-\frac{(x-1)^2+y^2}{0.25}}, \qquad w_0 = 0,
\]
whereas the value of the constants of the problem are reported in Table 1.
\begin{table}[h!]
\centering
\begin{tabular}[t]{cccc}
$M$ & $A$ & $\epsilon$ & $a$ \\
\hline
$1$ & $8$ & $0.2$ & $0.15$\\
\end{tabular}\caption{Values of the main parameters of the model}
\label{tab11:param}
\end{table}
We report in Figure \ref{fig:U} several snapshots of the evolution of the electrical potential $u$ throughout time. The results are obtained via the Newton-Galerkin scheme in \eqref{eq:Newt1}-\eqref{eq:Newt2}, making use of the same computational mesh $\mathcal{T}_h$ for each instant, with maximum diameter $h = 0.0125$ and a fixed timestep $\tau = 0.025$. As an exit criterion for the Newton iterations we check if the distance between two following iterations (measured in $H^1$ and $L^2$ norm respectively for $u$ and $w$) is below a suitable tolerance, which we set as $tol = 10^{-14}$. In accordance with experimental observations (see, e.g., \cite{book:pavarino}), the nonlinear dynamics shows a first quick propagation of the stimulus in the tissue and, after a plateau phase, a slow decrease of the electrical potential.
\begin{figure}[h!]
	\centering
	\subfloat[$t_1$ = 0]{
	 	\includegraphics[width=0.31\textwidth]{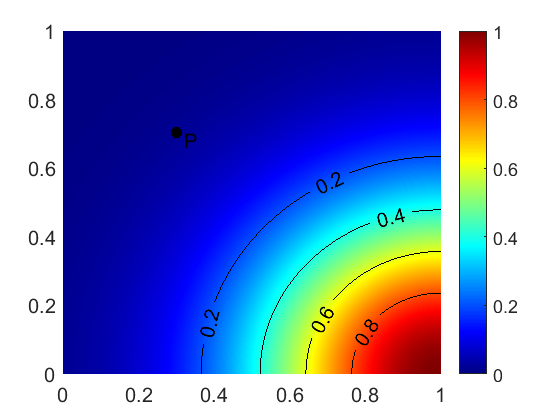}
	}
	\subfloat[$t_2$ = 0.5]{
	 	\includegraphics[width=0.31\textwidth]{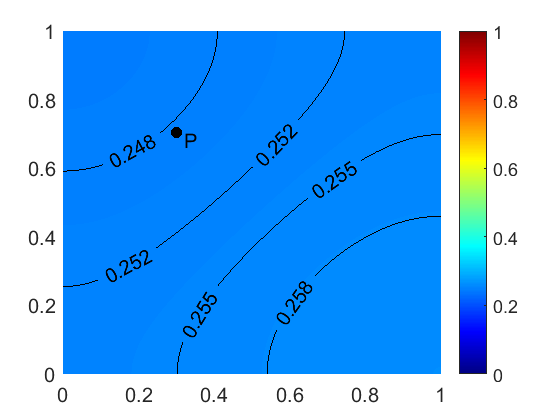}
	}
	\subfloat[$t_3$ = 2.5]{
	 	\includegraphics[width=0.31\textwidth]{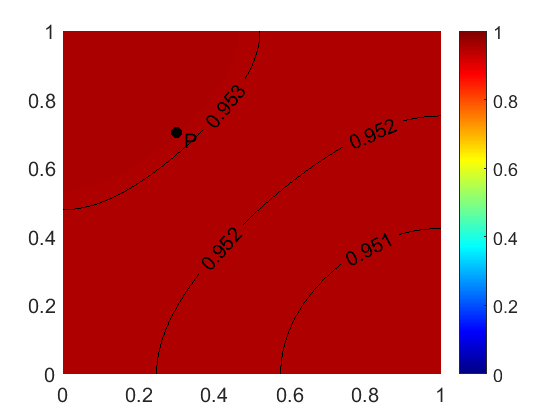}
	}\\
	\subfloat[$t_4$ = 7]{
	 	\includegraphics[width=0.31\textwidth]{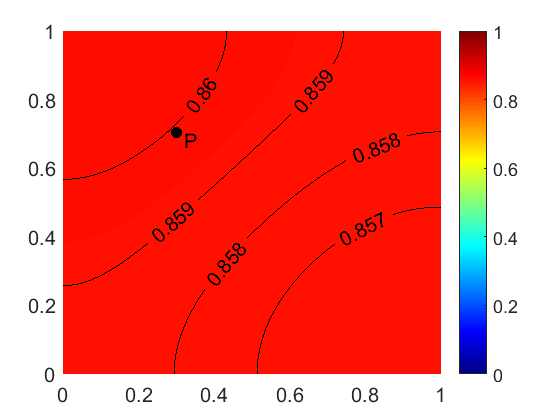}
	}
	\subfloat[$t_5$ = 10]{
	 	\includegraphics[width=0.31\textwidth]{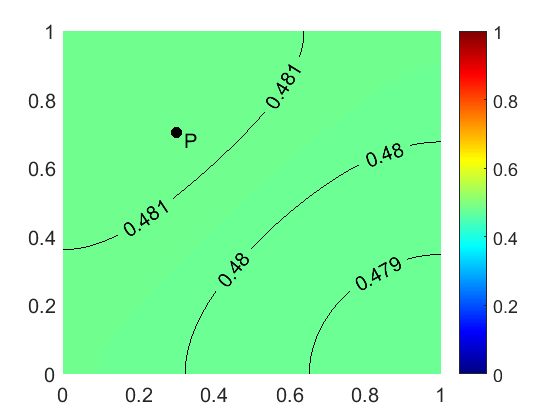}
	}
	\subfloat[$t_6$ = 14]{
	 	\includegraphics[width=0.31\textwidth]{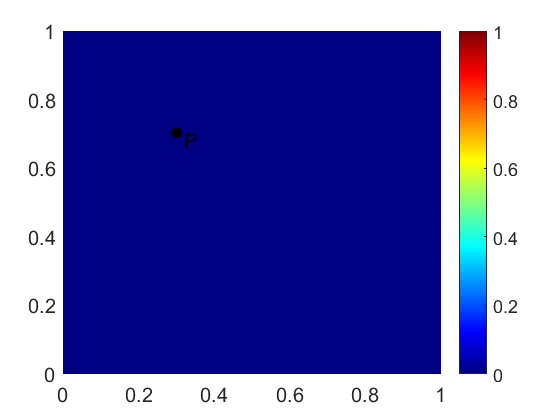}
	} \\
	\subfloat[Evolution at a specific point $P$]{
	 	\includegraphics[width=0.5\textwidth]{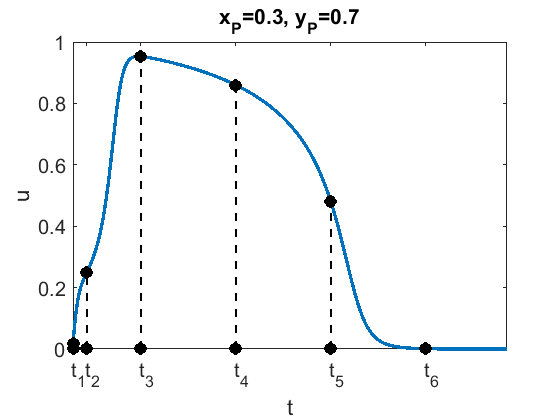}
	}
\caption{Snapshots of the evolution of the electrical potential. In Figures (a)-(f) the contour plots are shown in some selected instants $t_1, \ldots, t_6$. Figure (g) reports the value of the electrical potential in a specific point $P$; the instants $t_1, \ldots, t_6$ are remarked.}
\label{fig:U}
\end{figure}

\subsection{Spatial and temporal analysis}
We now verify the validity of the estimates stated in Theorem \ref{th:apost}. Due to the lack of an analytical expression for the solution of \eqref{eq:formaforte}, we need to build a high-fidelity numerical solution $(\tilde u,\tilde w)$. In particular, we employ a reference fine mesh with $h_{ref} = 4\cdot 10^{-3}$ and a time step $\tau_{ref} = 2 \cdot 10^{-3}$ to solve the  Newton scheme \eqref{eq:Newt1}-\eqref{eq:Newt2}, where $tol = 10^{-15}$ is employed to make negligible the linearization error (see Remark \ref{rem:EI}). Employing $(\tilde u,\tilde w)$ it is possible to compute the error associated to different discrete solutions, obtained with different values of $h$ and $\tau$, and to assess the validity of the \textit{a posteriori} error estimates introduced in Theorem \ref{th:apost} employing in particular the estimators defined in \eqref{eq:estEI}. 
\par
In Figure \ref{fig:above} we report the numerical verification of the upper bound \eqref{eq:above} for two different choices of the discretization parameters $h$ and $\tau$. Each line is piecewise constant on every interval $(t_{n-1},t_n)$. The red line represents the norm of the error on the interval $(0,t_n)$ (see the left-hand side of \eqref{eq:above} for its precise definition) computed with respect to the high-fidelity solution, whereas the blue line shows the sum of the estimators in each interval until $t_n$ (see the left-hand side of \eqref{eq:above}). In this case the upper bound holds with constant $1$.
\begin{figure}[h!]
\centering
	\subfloat[$h = 0.05$, $\tau = 0.1$]{
	 	\includegraphics[width=0.45\textwidth]{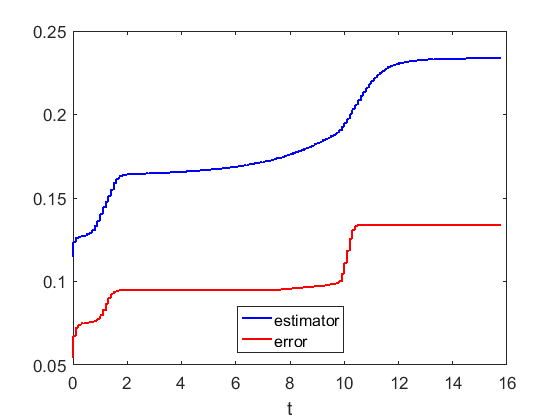}
	}
	\subfloat[$h = 0.0125$, $\tau = 0.025$]{
	 	\includegraphics[width=0.45\textwidth]{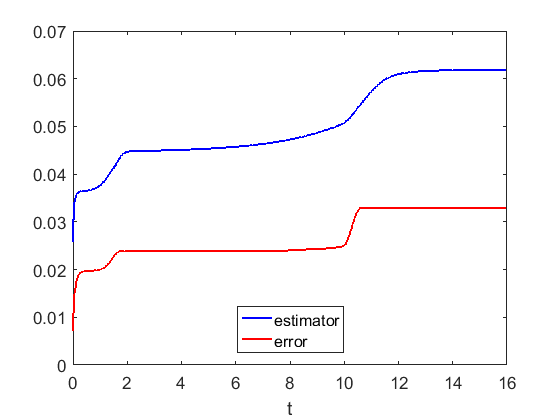}
	}
\caption{Assessment of the upper bound}%
\label{fig:above}%
\end{figure}
\par
Moreover, in Figure \ref{fig:diagonal} we investigate the convergence rates for both the \textit{a posteriori} estimator and the error norm with respect to the mesh size $h$ and the timestep $\tau$.
The results are obtained by linearly reducing both $h$ and $\tau$ at the same time. The convergence history reported in Figure \ref{fig:diagonal} shows that the error decays with linear rate, as expected from the \textit{a priori} estimate in Theorem \ref{th:apriori2}, and the \textit{a posteriori} estimator decays with the same (linear) rate.
\par
\begin{figure}%
\begin{minipage}[t]{.5\textwidth}
\vspace{0pt}
\raggedleft
	\includegraphics[width=\textwidth]{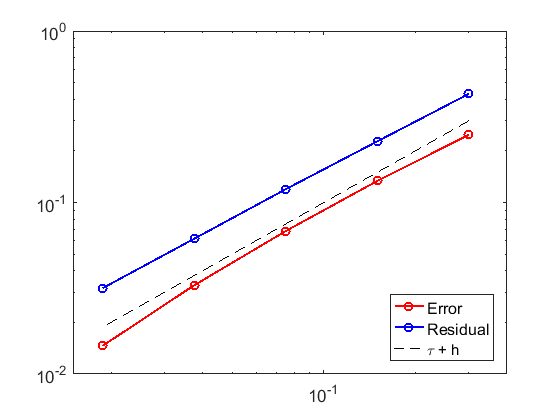}
\vspace{0pt}
\end{minipage}
\begin{minipage}[t]{.5\textwidth}
\vspace{1cm}
\hspace{1cm}
\raggedright
\begin{tabular}{ccc}
	  $h$ & $\tau$ & $tol$ \\
		\hline 
		$0.1$ & $0.2$ & $10^{-14}$ \\
		$0.05$ & $0.1$ & $10^{-14}$ \\
		$0.025$ & $0.05$ & $10^{-14}$ \\
		$0.0125$ & $0.025$ & $10^{-14}$ \\
		$0.00625$ & $0.0125$ & $10^{-14}$
	\end{tabular}
	\vspace{0pt}
\end{minipage}
\caption{Convergence analysis in $h$ and $\tau$}%
\label{fig:diagonal}%
\end{figure}

\subsection{Linearization analysis}
We now numerically assess the validity of the \textit{a posteriori} estimate concerning the linearization error. In order to reduce as much as possible the numerical error induced by spatial and temporal approximations, we perform the the numerical experiments with the same discretization parameters ($h_{ref} = 4\cdot 10^{-3}$, $\tau_{ref} = 2 \cdot 10^{-3}$) employed to build the high-fidelity numerical solution. Selecting an instant $t_n$, we compute several iterations of the Newton scheme \eqref{eq:Newt1}-\eqref{eq:Newt2} until the convergence criterion 
is satisfied with $tol = 10^{-15}$. The iterative scheme produces a sequence $\{\uhkn, \whkn\}_{k = 0,\ldots,K}$. Then, for each $k$ we compute $\gamma_k^n$ and compare it with the linearization error. In Figure \ref{fig:newton} we report the described comparison at $t_n = 2.5$ and $t_n = 10$.
\begin{figure}[h!]
\centering
	\subfloat[$t_n = 2.5$; accepted at iteration $5$]{
	 	\includegraphics[width=0.45\textwidth]{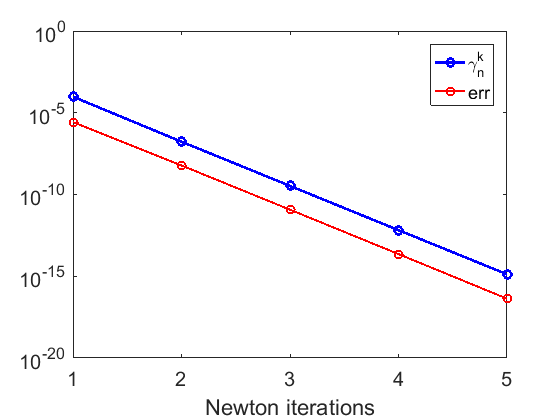}
	}
	\subfloat[$t_n = 10$; accepted at iteration $4$]{
	 	\includegraphics[width=0.45\textwidth]{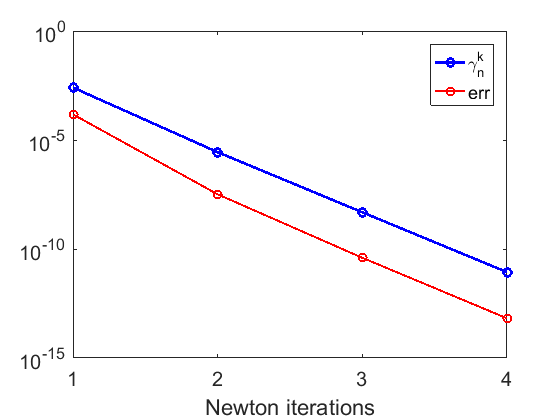}
	}
\caption{Assessment of the \textit{a posteriori} indicator $\gamma_k^n$ for the linearization error}%
\label{fig:newton}%
\end{figure}
We observe that for each $k = 1,\ldots,K$ the estimator is above the error, and they decrease with the same rate.
\section{Conclusions}
We considered the numerical approximation of the monodomain model, a system of a parabolic semilinear reaction-diffusion equation coupled with a nonlinear ordinary differential equation. The monodomain model arises from the (simplified) mathematical description of the electrical activity of the heart. In particular, we derived \textit{a posteriori} error estimators accounting for different sources of error (space/time discretization and linearization). Moreover, after obtaining an \textit{a priori} error estimate, we showed  reliability and efficiency (this latter under a suitable assumption) of the error indicators. Lastly, a set of numerical experiments assess the validity of the theoretical results.

\newpage
\printbibliography
\end{document}